\definecolor{darkblue}{rgb}{0.0,0.0,0.7}
\let\epsilon=\varepsilon
\let\ep=\epsilon
\let\phi=\varphi
\let\al=\alpha  \let\Ga=\Gamma 
\let\de=\delta
  \let\ga=\gamma
\let\Om=\Omega \let\om=\omega 
\let\si=\sigma  \let\ze=\zeta
\let\la=\lambda  
\let\tilde=\widetilde
\newcommand{\abs}[1]{\lvert#1\rvert}
\newcommand{\bigabs}[1]{\bigl\lvert#1\bigr\rvert}
\newcommand{\Bigabs}[1]{\Bigl\lvert#1\Bigr\rvert}
\newcommand{\bigpar}[1]{\bigl(#1\bigr)}
\newcommand{\Bigpar}[1]{\Bigl(#1\Bigr)}
\newcommand{\bigcro}[1]{\bigl[#1\bigr]}
\newcommand{\Bigcro}[1]{\Bigl[#1\Bigr]}
\newcommand{\norm}[1]{\|#1\|}
\newcommand{\field}[1]{\mathbb{#1}}
\newcommand{\R}{\field{R}}
\newcommand{\N}{\field{N}}
\newcommand{\G}{{\mathcal G}}
\newcommand{\F}{{\mathscr{F}}}
\newcommand{\prodsca}[1]{\langle #1 \rangle}
\newcommand{\HH}{{\mathcal H}}
\newcommand{\Hb}{H}
\def\der^#1_#2{\frac{\partial^{#1}}{\partial {#2}^{#1}}}
\newcommand{\ind}[1]{\mathbf 1_{#1}}
\newcommand{\set}[1]{\left\{#1\right\}}
\newcommand{\var}{{\rm Var}\,}
\theoremstyle{plain}
\newtheorem{theorem}{Theorem} \newtheorem{corollary}{Corollary}
\newtheorem{lemma}{Lemma}
\newtheorem{prop}{Proposition}
\newtheorem{assumption}{Assumption}{\bf}{\rm}%
\theoremstyle{definition}
\theoremstyle{remark}
\newtheorem{remark}{Remark}
\newtheorem{example}{Example}
{\smallskip}%
\newcommand{\E}{\field{E}}
\renewcommand{\P}{\field{P}}
\begin{document}

\bibliographystyle{plain}

\title{Nonparametric regression with martingale increment errors}

\author{Sylvain Delattre$ ^1$, St\'ephane Ga\"iffas$ ^2$}

\footnotetext[1]{Universit\'e Paris Diderot - Paris 7, Laboratoire de
  Probabilit\'es et Mod\`eles Al\'eatoires. \emph{email}:
  \texttt{delattre@math.jussieu.fr}}

\footnotetext[2]{Universit\'e Pierre et Marie Curie - Paris~6,
  Laboratoire de Statistique Th\'eorique et Appliqu\'ee. \emph{email}:
  \texttt{stephane.gaiffas@upmc.fr}}

\footnotetext[3]{This work is supported in part by French Agence
  Nationale de la Recherche (ANR) ANR Grant \textsc{``Prognostic''}
  ANR-09-JCJC-0101-01. (\texttt{http://www.lsta.upmc.fr/prognostic/index.php})
}

\maketitle

\begin{abstract}
  We consider the problem of adaptive estimation of the regression
  function in a framework where we replace ergodicity assumptions
  (such as independence or mixing) by another structural assumption on
  the model. Namely, we propose adaptive upper bounds for kernel
  estimators with data-driven bandwidth (Lepski's selection rule) in a
  regression model where the noise is an increment of martingale. It
  includes, as very particular cases, the usual i.i.d. regression and
  auto-regressive models. The cornerstone tool for this study is a new
  result for self-normalized martingales, called ``stability'', which
  is of independent interest. In a first part, we only use the
  martingale increment structure of the noise. We give an adaptive
  upper bound using a random rate, that involves the occupation time
  near the estimation point. Thanks to this approach, the theoretical
  study of the statistical procedure is disconnected from usual
  ergodicity properties like mixing. Then, in a second part, we make a
  link with the usual minimax theory of deterministic rates. Under a
  $\beta$-mixing assumption on the covariates process, we prove that
  the random rate considered in the first part is equivalent, with
  large probability, to a deterministic rate which is the usual
  minimax adaptive one.
  \\

  \noindent%
  \emph{Keywords.} Nonparametric regression ; Adaptation ; Kernel
  estimation ; Lepski's method ; Self-normalized martingales ; Random
  rates ; Minimax rates ; $\beta$-Mixing.
\end{abstract}

\section{Introduction}

\subsection{Motivations}

In the theoretical study of statistical or learning algorithms,
stationarity, ergodicity and concentration inequalities are
assumptions and tools of first importance. When one wants to obtain
asymptotic results for some procedure, stationarity and ergodicity of
the random process generating the data is mandatory. Using extra
assumptions, like moments and boundedness conditions, concentration
inequalities can be used to obtain finite sample results. Such tools
are standard when the random process is assumed to be i.i.d., like
Bernstein's or Talagrand's inequality (see \cite{ledoux_talagrand91},
\cite{van_der_vaart_wellner96} and \cite{MR1419006}, among others). To
go beyond independence, one can use a mixing assumption in order to
``get back'' independence using coupling, see \cite{MR1312160}, so
that, roughly, the ``independent data tools'' can be used again. This
approach is widely used in nonparametric statistics, statistical
learning theory and time series analysis.

The aim of this paper is to replace stationarity and ergodicity
assumptions (such as independence or mixing) by another structural
assumption on the model. Namely, we consider a regression model where
the noise is an increment of martingale. It includes, as very
particular cases, the usual i.i.d. regression and the auto-regressive
models. The cornerstone tool for this study is a new result, called
``stability'', for self-normalized martingales, which is of
independent interest. In this framework, we study kernel estimators
with a data-driven bandwidth, following the Lepski's selection rule,
see~\cite{lepski90}, \cite{lepski_mammen_spok_97}.

The Lepski's method is a statistical algorithm for the construction of
optimal adaptive estimators. It was introduced in
\cite{lepski88,lepski90,lepski_92}, and it provides a way to select
the bandwidth of a kernel estimator from the data. It shares the same
kind of adaptation properties to the inhomogeneous smoothness of a
signal as wavelet thresholding rules, see
\cite{lepski_mammen_spok_97}. It can be used to construct an adaptive
estimator of a multivariate anisotropic signal, see
\cite{kerk_lepski_picard01}, and recent developments shows that it can
be used in more complex settings, like adaptation to the
semi-parametric structure of the signal for dimension reduction, or
the estimation of composite functions, see \cite{MR2449122},
\cite{composite}. In summary, it is commonly admitted that Lepski's
idea for the selection of a smoothing parameter works for many
problems. However, theoretical results for this procedure are mostly
stated in the idealized model of Gaussian white noise, excepted for
\cite{MR2339297}, where the model of regression with a random design
was considered. As far as we know, nothing is known on this procedure
in other settings: think for instance of the auto-regressive model or
models with dependent data.

Our approach is in two parts: in a first part, we consider the problem
of estimation of the regression function. We give an adaptive upper
bound using a random rate, that involves the occupation time at the
estimation point, see Theorem~\ref{thm:upper_bound}. In this first
part, we only use the martingale increment structure of the noise, and
not stationarity or ergodicity assumptions on the
observations. Consequently, even if the underlying random process is
transient (e.g. there are few observations at the estimation point),
the result holds, but the occupation time is typically small, so that
the random rate is large (and eventually not going to zero as the
sample size increases). The key tool is a new result of stability for
self-normalized martingales stated in Theorem~\ref{thm:key}, see
Section~\ref{sec:martingale}. It works surprisingly well for the
statistical application proposed here, but it might give new results
for other problems as well, like upper bounds for procedures based on
minimization of the empirical risk, model selection (see
\cite{massart03}), etc. In a second part (Section~\ref{sec:examples}),
we make a link with the usual minimax theory of deterministic rates.
Using a $\beta$-mixing assumption, we prove that the random rate used
in Section~\ref{sec:main-results} is equivalent, with a large
probability, to a deterministic rate which is the usual adaptive
minimax one, see Proposition~\ref{prop:det-rate}.

The message of this paper is twofold. First, we show that the kernel
estimator and Lepski's method are very robust with respect to the
statistical properties of the model: they does not require
stationarity or ergodicity assumptions, such as independence or mixing
to ``do the job of adaptation'', see
Theorem~\ref{thm:upper_bound}. The second part of the message is that,
for the theoretical assessment of an estimator, one can use
advantageously a theory involving random rates of convergence. Such a
random rate naturally depends on the occupation time at the point of
estimation (=the local amount of data), and it is ``almost
observable'' if the smoothness of the regression were to be known. An
ergodicity property, such as mixing, shall only be used in a second
step of the theory, for the derivation of the asymptotic behaviour of
this rate (see Section~\ref{sec:examples}). Of course, the idea of
random rates for the assessment of an estimator is not new. It has
already been considered in \cite{MR1802549, RePEc:wpa:wuwpem:0411007}
for discrete time and in \cite{dhk02} for diffusion models. However,
this work contains, as far as we know, the first result concerning
adaptive estimation of the regression with a martingale increment
noise.

\subsection{The model}
\label{sec:model}

Consider sequences $(X_k)_{k\ge 0}$ and $(Y_k)_{k\ge 1}$ of random
variables respectively in $\mathbb R^d$ and $\mathbb R$, both adapted
to a filtration $(\F_k)_{k\ge 0}$, and such that for all $k\ge 1$:
\begin{equation}
  \label{regmart}
  Y_k = f(X_{k-1}) + \ep_k,
\end{equation}
where the sequence $(\varepsilon_k)_{k \geq 1}$ is a $(\mathscr
F_k)$-martingale increment:
\begin{equation*}
  \E( | \ep_k | | \F_{k-1} ) < \infty \; \text{ and } \; \E
  ( \ep_k | \F_{k-1} ) = 0,
\end{equation*}
and where $f : \R^d \rightarrow \R$ is the unknown function of
interest. We study the problem of estimation of $f$ at a point $x \in
\R^d$ based on the observation of $(Y_1,\dots, Y_N)$ and $(X_0,\dots,
X_{N-1})$, where $N \ge 1$ is a finite $(\mathscr F_k)$-stopping time.
This allows for ``sample size designing'', see
Remark~\ref{rem:stoppingtime} below. The analysis is conducted under
the following assumption on the sequence $(\ep_k)_{k \geq 1}$:
\begin{assumption}
  \label{hypmoments}
  There is a $(\mathscr F_k)$-adapted sequence $(\sigma_k)_{k \geq
    0}$, assumed to be observed, of positive random variables and
  $\mu, \gamma > 0$ such that\textup:
  \begin{equation*}
    \E\Bigcro{\exp\Bigpar{\mu \frac{\ep_k^2} {\si^2_{k-1}} } \mid
      \F_{k-1}} \le \ga \quad \forall k \geq 1.
  \end{equation*}
\end{assumption}
This assumption means that the martingale increment $\ep_k$,
normalized by $\sigma_{k-1}$, is uniformly subgaussian. In the case
where $\varepsilon_k$ is Gaussian conditionally to $\mathscr F_{k-1}$,
Equation (\ref{hypmoments}) is satisfied if $(\sigma_k)$ is such that
$\var(\varepsilon_k | \F_{k-1}) \leq c \sigma_{k-1}^2$ for any $k \geq
0$, where $c > 0$ is a deterministic constant not depending on $k$. If
one assumes that $\var(\varepsilon_k | \F_{k-1}) \leq \bar \sigma ^2$
for a known constant $\bar \sigma > 0$, one can take simply $\sigma_k
\equiv \bar \sigma$. Note that $\sigma_{k-1}$ is not necessarily the
conditional variance of $\varepsilon_k$, but an observed upper bound
of it.

Particular cases of model~\eqref{regmart} are the regression and the
auto-regressive model.
\begin{example}
  In the regression model, one observes $(Y_k, X_{k-1})_{k=1}^n$
  satisfying
  \begin{equation*}
    Y_k = f(X_{k-1}) + s(X_{k-1}) \zeta_k,
  \end{equation*}
  where $(\zeta_k)$ is i.i.d. centered, such that $\E( \exp(\mu
  \zeta_k^2) ) \leq \gamma$ and independent of $\mathscr F_k =
  \sigma(X_0, \ldots, X_k)$, and where $f : \R^d \rightarrow \R$ and
  $s : \R^d \rightarrow \R^+$. This model is a particular case
  of~\eqref{regmart} with $\sigma_k^2 \geq s(X_k)^2$.
\end{example}

\begin{example}
  In the auto-regressive model, one observes a sequence
  $(X_k)_{k=0}^n$ in $\R^d$ satisfying
  \begin{equation}
    \label{eq:autoregression}
    X_k =  \vec{f}(X_{k-1}) + S(X_{k-1}) \vec{\zeta_k},
  \end{equation}
  where $\vec{f} = (f_1, \ldots, f_d) : \R^d \rightarrow \R^d$, where
  $S : \mathbb R^d \rightarrow \mathbb R^{d \times d}$ and where
  $\vec{\zeta_k} = (\zeta_{k,1}, \ldots, \zeta_{k,d})$ is a sequence
  of centered i.i.d. vectors in $\R^d$ independent of $X_0$, with
  covariance matrix $I_d$ and such that $\E( \exp(\mu \zeta_{k, j}^2)
  ) \leq \gamma$. The problem of estimation of each coordinate $f_j$
  is a particular case of~\eqref{regmart} with $Y_k = (X_k)_j$,
  $\mathscr F_k = \sigma(X_0, \vec\zeta_1, \ldots, \vec\zeta_k)$ and
  $\sigma_k^2 \geq S_{j, j}(X_k)^2$.
\end{example}

Let us mention that these two examples are very particular. The
analysis conducted here allows to go way beyond the i.i.d. case, as
long as $(\zeta_k)$ is a martingale increment.

\begin{remark}
  \label{rem:stoppingtime}
  The results given in Section~\ref{sec:main-results} are stated in a
  setting where one observes $(X_{k-1}, Y_k)_{k=1}^N$ with $N$ a
  stopping time. Of course, this contains the usual case $N \equiv n$,
  where $n$ is a fixed sample size. This framework includes situations
  where the statistician decides to stop the sampling according to
  some design of experiment rule. This is the case when obtaining data
  has a cost, that cannot be more than a maximum value, for instance.
\end{remark}

\begin{remark}
  Note that while $\zeta_k = \varepsilon_k / \sigma_{k-1}$ is
  conditionally subgaussian, $\varepsilon_k$ is not in general, (see
  \cite{MR853051} for examples).
\end{remark}

\subsection{The Lepski's method}
\label{sec:lepski_method}

In what follows, $|x|$ stands for the Euclidean norm of $x \in
\R^d$. An object of importance in the analysis conducted below is the
following. For $h > 0$, we define
\begin{equation*}
  L(h) = \sum_{k=1}^N \frac{1}{\si^2_{k-1}} \ind{\abs{X_{k-1} - x} \le
    h},
\end{equation*}
which is the occupation time of $(X_k)_{k \geq 0}$ at $x$ renormalized
by $(\sigma_k)$. Then, if $h$ is such that $L(h) > 0$ (there is at
least one observations in the interval $[x - h, x + h]$), we define
the kernel estimator
\begin{equation*}
  \hat f(h)= \frac{1}{L(h)} \sum_{k=1}^N \frac{1}{\si^2_{k-1}}
  \ind{\abs{X_{i-1} - x}\le h} Y_k.
\end{equation*}
Let $(h_i)_{i \geq 0}$ be a decreasing sequence of positive numbers,
called \emph{bandwidths}, and define the following set, called
\emph{grid}, as
\begin{equation*}
  \HH := \{ h_j : L(h_j) > 0 \}.
\end{equation*}
For the sake of simplicity, we will consider only on a geometrical
grid, where
\begin{equation*}
  h_j = h_0 q^j
\end{equation*}
for some parameters $h_0 > 0$ and $q \in (0, 1)$. The Lepski's method
selects one of the bandwidths in $\mathcal H$. Let $b > 0$ and for any
$h > 0$, define
\begin{equation*}
  \psi(h) := 1 + b \log(h_0 / h).
\end{equation*}
For $u > 0$, define, on the event $\set{ L(h_0)^{-1/2} \le u}$, the
bandwidth
\begin{equation}
  \label{eq:Hu}
  H_u = \min \Big\{ h \in \HH : \Big( \frac{\psi(h)}{L(h)} \Big)^{1/2}
  \le u \Big\},
\end{equation}
and let $u_0>0$. The estimator of $f(x)$ is $\hat f({\hat H})$ defined
on the set $\{ L(h_0)^{-1/2} \le u_0\}$, where $\hat H$ is selected
according to the following rule:
\begin{align}
  \nonumber \hat H := \max \Big\{ h \in \HH : h \ge H_{u_0} \text{ and
  } &\forall
  h'\in[\Hb_{u_0},h] \cap \HH, \\
  &| \hat f(h) - \hat f(h') | \le \nu \Big(\frac{\psi(h')}{L(h')}
  \Big)^{1/2} \Big\},
  \label{eq:hatH}
\end{align}
where $\nu$ is a positive constant. This is the standard Lepski's
procedure, see \cite{lepski90, lepski_92, lepski_mammen_spok_97,
  lepski_spok97}. In the next Section, we give an upper bound for
$\hat f(\hat H)$, with a normalization (convergence rate) that
involves $L(h)$. This result is stated without any further assumptions
on the model.

\begin{remark}
  \label{re:u0}
  The number $u_0$ is a fixed constant such that the largest bandwidth
  $h_0$ in the grid satisfies $L(h_0)^{-1/2} \leq u_0$. This
  deterministic constraint is very mild: if we have some data close to
  $x$, and if $h_0$ is large enough (this is the largest bandwidth in
  the grid), then $L(h_0)$ should be large, at least such that
  $L(h_0)^{-1/2} \leq u_0$. Consider the following basic example: $X_k
  \in [-1, 1]^d$ almost surely for any $k$ and $\sigma_k \equiv 1$,
  then by taking $h_0 = \sqrt d$ and $u_0 = 1$ the event $\{
  L(h_0)^{-1/2} \leq u_0 \}$ has probability one. In
  Section~\ref{sec:examples} (see Proposition~\ref{prop:det-rate}) we
  prove that a mixing assumption on $(X_k)_{k \geq 0}$ entails that
  this event has an overwhelming probability.
\end{remark}

\section{Adaptive upper bound}
\label{sec:main-results}

The usual way of stating an adaptive upper bound for $\hat f(\hat H)$,
see for instance \cite{lepski_mammen_spok_97}, is to prove that it has
the same convergence rate as the \emph{oracle} estimator $\hat
f(H^*)$, which is the ``best'' among a collection $\{ \hat f(h) : h
\in \mathcal H \}$. The oracle bandwidth $H^*$ realizes a
bias-variance trade-off, that involves explicitly the unknown $f$. For
$h \in \HH$ define
\begin{equation}
  \label{eq:ftilde}
  \tilde f(h) := \frac{1}{L(h)} \sum_{k=1}^N \frac{1}{\si^2_{k-1}}
  \ind{\abs{X_{k-1}-x}\le h} f(X_{k-1}).
\end{equation}
Consider a family of non-negative random variables $(W(h) ; h \in
\mathcal H)$ that bounds from above the local smoothness of $f$
(measured by its increments):
\begin{equation}
  \label{eq:bias-assumption}
  \sup_{h' \in [\Hb_{u_0}, h] \cap \HH} \bigabs{\tilde f(h')-
    f(x)} \leq W(h), \quad \forall h \in \mathcal H.
\end{equation}
Nothing is required on $(W(h) : h \in \mathcal H)$ for the moment, one
can perfectly choose it as the left hand side
of~\eqref{eq:bias-assumption} for each $h \in \mathcal H$ for
instance. However, for the analysis conducted here, we need to bound
$W$ from below and above (see Remark~\ref{rem:barW}): introduce
\begin{equation}
  \label{eq:defWbar}
  \bar{W}(h) := [W(h) \vee (\delta_0 (h/h_0)^{\al_0})] \land u_0,
\end{equation}
where $\delta_0$ and $\al_0$ are positive constants. On the set
\begin{equation*}
  \big\{ L(h_0)^{-1/2} \le \bar W(h_0) \big\},
\end{equation*}
define the random \emph{oracle} bandwidth
\begin{equation}
  \label{eq:Hstar}
  H^* := \min \Big\{ h \in \HH : \Big( \frac{\psi(h)}{L(h)} \Big)^{1/2}
    \le \bar W(h) \Big\},
\end{equation}
and consider the event
\begin{equation*}
  \Omega' := \big\{ L(h_0)^{-1/2} \le \bar W(h_0),  W(H^*) \le u_0
  \big\}.
\end{equation*}
The event $\Omega'$ is the ``minimal'' requirement for the proof of an
upper bound for $\hat f(\hat H)$, see Remarks~\ref{rem:barW}
and~\ref{rem:omega} below.
\begin{theorem}
  \label{thm:upper_bound}
  Let Assumption~\ref{hypmoments} hold and let $\hat f(\hat H)$ be the
  procedure given by the Lepski's rule~\eqref{eq:hatH}. Then, for any
  $\rho \in (0, b \mu \nu^2 / (64 \alpha_0 (1 + \gamma)) )$, we have
  \begin{equation*}
    \P \Big[ \Big\{ \abs{ \hat f (\hat H)- f(x) } \geq t \bar W(H^*)
    \Big\} \cap \Omega' \Big] \leq C_0 \frac{(\log(t+1))^{1 + \rho /
        2}}{t^\rho}
  \end{equation*}
  for any $t \geq t_0$, where $C_0, t_0 > 0$ are constants depending
  on $\rho, \mu,\ga, q, b, u_0, \delta_0, \al_0,\nu$.
\end{theorem}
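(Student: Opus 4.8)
The plan is to follow the now-classical Lepski oracle-inequality scheme, but with the crucial input being the stability result for self-normalized martingales (Theorem~\ref{thm:key}). First I would set up the stochastic part: write $\hat f(h) - \tilde f(h) = L(h)^{-1} \sum_{k=1}^N \sigma_{k-1}^{-2} \ind{\abs{X_{k-1}-x}\le h} \ep_k =: L(h)^{-1} M_N(h)$, where $M_\cdot(h)$ is an $(\F_k)$-martingale with predictable quadratic variation controlled by $L(h)$ (using Assumption~\ref{hypmoments}, which makes the increments conditionally subgaussian at scale $\sigma_{k-1}$). The point of the self-normalized stability theorem is exactly to control $\abs{M_N(h)}/L(h)$ by roughly $(\psi(h)/L(h))^{1/2}$ simultaneously over the grid $\HH$, up to a factor that is a power of the ``overshoot'' $t$; concretely I expect an event $\Omega_t$ of probability at least $1 - C_0 (\log(t+1))^{1+\rho/2} t^{-\rho}$ on which
\begin{equation*}
  \abs{\hat f(h) - \tilde f(h)} \le \tfrac{t}{4}\,\Big(\frac{\psi(h)}{L(h)}\Big)^{1/2}
  \quad \text{for all } h \in \HH
\end{equation*}
with $\rho$ in the stated range (the constraint $\rho < b\mu\nu^2/(64\alpha_0(1+\gamma))$ is dictated by matching the exponent coming from the stability tail to the logarithmic penalty $\psi$ and the constant $\nu$ in the selection rule). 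The logarithmic factor $\psi(h) = 1 + b\log(h_0/h)$ is what allows a union bound over the geometric grid to be absorbed.

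Next I would do the deterministic Lepski argument on $\Omega_t \cap \Omega'$. Recall $\tilde f(h) - f(x)$ is bounded by $\bar W(h)$ (by~\eqref{eq:bias-assumption} and the definition~\eqref{eq:defWbar}, noting $\bar W \ge W$), so on $\Omega_t$ each $\hat f(h)$ is within $\tfrac{t}{4}(\psi(h)/L(h))^{1/2} + \bar W(h)$ of $f(x)$. The key two facts are: (i) $H^* \ge H_{u_0}$, because the defining inequality $(\psi(h)/L(h))^{1/2} \le \bar W(h) \le u_0$ for $H^*$ forces $(\psi(H^*)/L(H^*))^{1/2}\le u_0$, and $H_{u_0}$ is the smallest such bandwidth; and (ii) $H^*$ is an admissible competitor in the max defining $\hat H$, i.e. for all $h' \in [H_{u_0}, H^*]\cap\HH$ one has $\abs{\hat f(H^*) - \hat f(h')}\le \nu(\psi(h')/L(h'))^{1/2}$. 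Point (ii) follows because for such $h'$, $L(h')\le L(H^*)$ (monotonicity of $L$) is the wrong direction — actually $h'\le H^*$ gives $L(h')\le L(H^*)$, so $(\psi(h')/L(h'))^{1/2}\ge(\psi(H^*)/L(H^*))^{1/2}$ — and by minimality of $H^*$ every $h'<H^*$ in the grid has $(\psi(h')/L(h'))^{1/2} > \bar W(h') \ge \delta_0(h'/h_0)^{\alpha_0}$; combining the triangle inequality $\abs{\hat f(H^*)-\hat f(h')}\le \tfrac t2 (\psi(h')/L(h'))^{1/2} + \bar W(H^*) + \bar W(h')$ with $\bar W(H^*)\le \bar W(h')$ and the bias lower bound, one checks this is $\le \nu(\psi(h')/L(h'))^{1/2}$ provided $t$ is large enough and $\nu$ was chosen large relative to the stability constant (this is where $t\ge t_0$ enters). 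Hence $\hat H \ge H^*$.

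With $\hat H \ge H^*$ in hand, I would bound $\abs{\hat f(\hat H) - f(x)} \le \abs{\hat f(\hat H) - \hat f(H^*)} + \abs{\hat f(H^*) - f(x)}$. The first term is controlled directly by the selection rule~\eqref{eq:hatH} applied with $h = \hat H$ and $h' = H^*$ (both in $[H_{u_0},\hat H]\cap\HH$), giving $\le \nu(\psi(H^*)/L(H^*))^{1/2}\le \nu\,\bar W(H^*)$ by the defining property of $H^*$. The second term is $\le \tfrac t4(\psi(H^*)/L(H^*))^{1/2} + \bar W(H^*) \le (\tfrac t4 + 1)\bar W(H^*)$ on $\Omega_t$. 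Adjusting constants so that $(\nu + t/4 + 1)\le t$ for $t\ge t_0$, we get $\abs{\hat f(\hat H)-f(x)} \le t\,\bar W(H^*)$ on $\Omega_t\cap\Omega'$, which is precisely the complement of the event in the statement; the probability bound then comes from $\P(\Omega_t^c)$. The main obstacle is the first paragraph: obtaining the uniform-over-the-grid control of the self-normalized martingale with a polynomial tail in $t$ and, simultaneously, tracking the constants precisely enough to land in the admissible range for $\rho$ and to make the final constant comparison $(\nu + t/4 + 1 \le t)$ consistent with the choice of $\nu$ forced in step (ii) — the two steps impose competing constraints on $\nu$, $\rho$, $t_0$, and reconciling them (via the stability theorem's explicit dependence on $\mu,\gamma$) is the delicate point; the deterministic Lepski bookkeeping afterwards is routine.
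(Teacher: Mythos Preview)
There is a genuine gap in your deterministic Lepski step. You want to show that $H^*$ is admissible in the max defining $\hat H$, i.e.\ that $|\hat f(H^*)-\hat f(h')|\le \nu(\psi(h')/L(h'))^{1/2}$ for every $h'\in[H_{u_0},H^*]\cap\HH$. But on your event $\Omega_t$ the stochastic control is only $|\hat f(h)-\tilde f(h)|\le \tfrac t4(\psi(h)/L(h))^{1/2}$, so the triangle inequality yields at best $|\hat f(H^*)-\hat f(h')|\le \tfrac t2(\psi(h')/L(h'))^{1/2}+2\bar W(H^*)$. (Also, $\bar W$ is nondecreasing, so your claim $\bar W(H^*)\le \bar W(h')$ goes the wrong way; one has $\bar W(h')\le \bar W(H^*)$ for $h'\le H^*$.) Using $\bar W(h')<(\psi(h')/L(h'))^{1/2}$ for $h'<H^*$ this becomes $\le(\tfrac t2+2)(\psi(h')/L(h'))^{1/2}$, and you would need $\tfrac t2+2\le\nu$. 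Since $\nu$ is a \emph{fixed} parameter of the estimator while the theorem must hold for every $t\ge t_0$, this inequality fails for large $t$: the ``competing constraints'' you flag are not merely delicate, they are incompatible. In short, one cannot conclude $\hat H\ge H^*$ on $\Omega_t$ for $t$ large, and the case $\{\hat H<H^*\}$ carries a fixed (not $t$-dependent) probability on which the error must still be analyzed.

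The paper does not attempt to force $\hat H\ge H^*$. Instead it splits explicitly into $\{H^*\le\hat H\}$ (easy, essentially your last paragraph) and $\{H^*>\hat H\}$, and handles the latter via an auxiliary parameter $s\in(0,t)$: on $\{H^*>\hat H\}$ the failure of the selection rule at some $h'\in[H_{u_0},\hat H]$ forces either a moderate self-normalized deviation (of order $s$) at that $h'$, or a large deviation (of order $t/s$) at $H^*$, \emph{together with} the geometric constraint $H^*\le h_0(u_0 s/(\delta_0 t))^{1/\alpha_0}$; it is this last constraint that lets the union bound over the grid start far enough along to pick up the factor $t^{-\rho}$. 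The final choice $s\sim\sqrt{\rho\log t}$ balances the pieces and produces the stated tail. A second subtlety you pass over is that the stability theorem controls $\sqrt a\,|M_N|/(a+V_N)$ for \emph{deterministic} $a$, not $|M(h)|/\sqrt{L(h)}$; the paper handles this by working with $Z(h,a)=\sqrt a\,|M(h)|/(a+L(h))$, taking a supremum of $a$ over the interval $I(h)=[u_0^{-2},\delta_0^{-2}(h/h_0)^{-2\alpha_0}]$ (which is where the lower bound in $\bar W$ enters), and using a uniform-in-$a$ version of Theorem~\ref{thm:key}.
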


The striking fact in this Theorem is that we don't use any
stationarity, ergodicity or concentration property.  In particular, we
cannot give at this point the behaviour of the random normalization
$\bar W(H^*)$. It does not go to $0$ in probability with $N
\rightarrow +\infty$ when $L(h_0)$ does not go to $+\infty$ in
probability, which happens if $(X_k)_{k \geq 0}$ is a transient Markov
chain for instance. Hence, without any further assumption,
Theorem~\ref{thm:upper_bound} does not entail that $\hat f(\hat H)$ is
close to $f(x)$. On the other hand, when $(X_k)_{k \geq 0}$ is mixing,
we prove that $\bar W(H^*)$ behaves as the deterministic minimax
optimal rate, see Section~\ref{sec:examples}. The cornerstone of the
proof of this Theorem is a new result concerning the stability of
self-normalized martingales, see Theorem~\ref{thm:key} in
Section~\ref{sec:martingale} below.

\begin{remark}
  The parameter $\rho$ of decay of the probability in
  Theorem~\ref{thm:upper_bound} is increasing with the threshold
  parameter $\nu$ from~\eqref{eq:hatH}. So, for any $p > 0$ and $\nu$
  large enough, Theorem~\ref{thm:upper_bound} entails that the
  expectation of $(\bar W(H^*)^{-1} |\hat f(\hat H) - f(x) |)^p
  \ind{\Omega'}$ is finite.
\end{remark}

\begin{remark}
  \label{rem:barW}
  The definition of $\bar W$ is related to the fact that since nothing
  is required on the sequence $(X_k)$, the occupation time $L(h)$ can
  be small, even if $h$ is large. In particular, $L(h)$ has no reason
  to be close to its expectation. So, without the introduction of
  $\bar W$ above, that bounds from below $W$ by a power function, we
  cannot give a lower estimate of $H^*$ (even rough), which is
  mandatory for the proof of Theorem~\ref{thm:upper_bound}.
\end{remark}

\begin{remark}
  \label{rem:omega}
  On the event $\Omega'$, we have $\{ L(h_0)^{-1/2} \leq \bar W(h_0)
  \}$, meaning that the bandwidth $h_0$ (the largest in $\mathcal H$)
  is large enough to contain enough points in $[x - h_0, x + h_0]$, so
  that $L(h_0) \geq \bar W(h_0)^2$. This is not a restriction when
  $W(h) = L h^s$ [$f$ has a local H\"older exponent $s$] for instance,
  see Section~\ref{sec:examples}.
\end{remark}

\begin{remark}
  \label{rem:smoothness1}
  In the definition of $\hat f(\hat H)$, we use kernel estimation with
  the rectangular kernel $K(x) = \ind{[-1, 1]}(x)/2$. This is mainly
  for technical simplicity, since the proof of
  Theorem~\ref{thm:upper_bound} is already technically
  involved. Consequently, Theorem~\ref{thm:upper_bound} does not give,
  on particular cases (see Section~\ref{sec:examples}), the adaptive
  minimax rate of convergence for regression functions with an
  H\"older exponent $s$ larger than $1$. To improve this, one can
  consider the Lepski's method applied to local polynomials (LP) (see
  \cite{MR2339297}, and see \cite{fan_gijbels96} about (LP)). This
  would lead, in the framework considered here, to strong technical
  difficulties.
\end{remark}

\section{Stability for self-normalized martingales}
\label{sec:martingale}

We consider a local martingale $(M_n)_{n \in \mathbb N}$ with respect
to a filtration $(\mathcal G_n)_{n \in \mathbb N}$, and for $n \geq 1$
denote its increment by $\Delta M_n := M_n - M_{n-1}$. The predictable
quadratic variation of $M_n$ is
\begin{equation*}
  \prodsca{M}_n := \sum_{k=1}^n \E[ \Delta M_k^2 | \mathcal G_{k-1} ].
\end{equation*}
Concentration inequalities for martingales have a long history. The
first ones are the Azuma-Hoeffding's inequality (see~\cite{azuma67},
\cite{hoeffding63}) and the Freedman's inequality (see
\cite{freedman75}). The latter states that, if $(M_n)$ is a square
integrable martingale such that $|\Delta M_k| \leq c$ a.s. for some
constant $c > 0$ and $M_0 = 0$, then for any $x, y > 0$:
\begin{equation}
  \label{eq:freeman}
  \P [ M_n \geq x, \prodsca{M}_n \leq y] \leq \exp \Big( -
  \frac{x^2}{2(y + cx)} \Big).
\end{equation}
Later on, an alternative to the assumption $|\Delta M_k| \leq c$ was
proposed. This is the so-called Bernstein's condition, which requires
that there is some constant $c > 0$ such that for any $p \geq 2$:
\begin{equation}
  \label{eq:bernstein_condition}
  \sum_{k=1}^n \E\big[ | \Delta M_k|^p | \mathcal G_{k-1} \big] \leq
  \frac{p!}{2} c^{p-2} \prodsca{M}_n,
\end{equation}
see \cite{delapena99}, and \cite{pinelis94}. In \cite{van_de_geer00}
(see Chapter~8), inequality~\eqref{eq:freeman} is proved with
$\prodsca{M}_n$ replaced by a $\mathcal G_{n-1}$-measurable random
variable $n R_n^2$, under the assumption that
\begin{equation}
  \label{eq:vdg_condition}
  \sum_{k=1}^n \E\big[ | \Delta M_k|^p | \mathcal G_{k-1} \big] \leq
  \frac{p!}{2} c^{p-2} n R_n^2
\end{equation}
holds for any $p \geq 2$. There are many other very recent deviation
inequalities for martingales, in particular inequalities involving the
quadratic variation $[M]_n = \sum_{k=1}^n \Delta M_k^2$, see for
instance~\cite{delapena99} and \cite{MR2462551}.

For the proof of Theorem~\ref{thm:upper_bound}, a Bernstein's type of
inequality is not enough: note that in~\eqref{eq:freeman}, it is
mandatory to work on the event $\{ \prodsca{M}_n \leq y \}$. A control
of the probability of this event usually requires an extra assumption
on $(X_k)_{k \geq 0}$, such as independence or mixing (see
Section~\ref{sec:examples}), and this is precisely what we wanted to
avoid here. Moreover, for the proof of Theorem~\ref{thm:upper_bound},
we need a result concerning $M_T$, where $T$ is an arbitrary finite
stopping-time.

In order to tackle this problem, a first idea is to try to give a
deviation for the self-normalized martingale $M_T / \sqrt{\langle M
  \rangle_T}$.  It is well-known that this is not possible, a very
simple example is given in Remark~\ref{rem:nottight} below. In the
next Theorem~\ref{thm:key}, we give a simple solution to this
problem. Instead of $M_T / \sqrt{\langle M \rangle_T}$, we consider
$\sqrt{a} M_T / (a + \langle M \rangle_T)$, where $a > 0$ is an
arbitrary real number, and we prove that the exponential moments of
this random variable are uniformly bounded under
Assumption~\ref{ass:concentration} below. The result stated in
Theorem~\ref{thm:key} is of independent interest, and we believe that
it can be useful for other statistical problems.
\begin{assumption}
  \label{ass:concentration}
  Assume that $M_0 = 0$ and that
  \begin{equation}
    \label{eq:increment}
    \Delta M_n = s_{n-1} \zeta_n
  \end{equation}
  for any $n \geq 1$, where $(s_n)_{n \in \mathbb N}$ is a $(\mathcal
  G_n)$-adapted sequence of random variables and $(\ze_n)_{n \geq 1}$
  is a sequence of $(\mathcal G_n)$-martingale increments such that
  for $\alpha = 1$ or $\alpha = 2$ and some $\mu > 0, \gamma >
  1$\textup:
  \begin{equation}
    \label{eq:key_prop_ass}
    \E\big[ \exp( \mu |\zeta_k|^\alpha) | \mathcal G_{k-1} \big] \leq
    \gamma \;\; \text{ for any } \;\; k \geq 1.
  \end{equation}
\end{assumption}
Let us define
\begin{equation*}
  V_n := \sum_{k=1}^n s_{k-1}^2.
\end{equation*}
Note that if $(\zeta_n)_{n \geq 1}$ is a conditionally normalized
sequence (ie $\E(\zeta_n^2 | \mathcal G_{n-1}) = 1$)
then~\eqref{eq:increment} entails that $V_n = \langle M
\rangle_n$. Moreover, if Assumption~\ref{ass:concentration} holds, we
have $\langle M \rangle_n \leq c_\mu V_n$ for any $n \geq 1$ with
$c_\mu = \ln 2 / \mu$ when $\alpha = 2$ and $c_\mu = 2 / \mu^2$ when
$\alpha = 1$. Denote $\cosh(x) = (e^x + e^{-x}) / 2$ for any $x \in
\mathbb R$.
\begin{theorem}
  \label{thm:key}
  Let Assumption~\ref{ass:concentration} holds. \\
  $\bullet$~If $\alpha = 2$\textup, we have for any $\la \in [0,
  \frac{\mu}{2(1 + \gamma)})$\textup, any $a>0$ and any finite
  stopping-time $T$\textup:
  \begin{equation}
    \label{eq:prop_psi2}
    \E \Bigcro{\exp\Bigpar{\la \frac{a
          M_T^2}{(a+V_T)^2} }} \le 1 + c_{\lambda},
  \end{equation}
  where $c_{\lambda} := \exp\bigpar{ \frac{\lambda
      \Gamma_{\lambda}}{2(1 - 2\lambda \Gamma_{\lambda})}}
  (\exp(\lambda \Gamma_{\lambda}) - 1)$ and $\Gamma_{\lambda} :=
  \frac{1 + 2\gamma}{2(\mu - \lambda)}.$ \\
  $\bullet$ If $\alpha = 1$\textup, we have for any $\lambda \in
  (-\mu, \mu)$, any $a > 0$ and any finite stopping-time $T$:
  \begin{equation}
    \label{eq:prop_psi1}
    \E\Bigcro{\cosh\Bigpar{\lambda \frac{\sqrt{a}
          M_T}{a+V_T} }} \le 1 + c_{\lambda}',
  \end{equation}
  where $c_{\lambda}' = (\gamma-1) \lambda^2 \exp\bigpar{(\gamma-1)
    \lambda^2 / \mu^2} \cosh\bigpar{2\log 2 + 2 (\gamma-1) \lambda^2 /
    \mu^2} / \mu^2$.
\end{theorem}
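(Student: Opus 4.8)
The plan is to build, for each value of $a>0$, an auxiliary supermartingale out of the exponential of a suitably chosen quadratic (resp.\ linear) functional of $M$ and $V$, then apply optional stopping at the finite stopping-time $T$. The natural candidate in the case $\alpha=2$ is something like $Z_n = \exp\bigpar{\la \tfrac{a M_n^2}{(a+V_n)^2}}$ divided by a predictable correction term; the key algebraic identity to exploit is that $a/(a+V_n)^2$ is decreasing in $n$, which will let us absorb the ``bias'' created by the martingale increments. Concretely, I would first write, for fixed $n$,
\begin{equation*}
  \frac{a M_n^2}{(a+V_n)^2} = \frac{a M_{n-1}^2}{(a+V_n)^2} + \frac{2 a M_{n-1} \saut M_n}{(a+V_n)^2} + \frac{a (\saut M_n)^2}{(a+V_n)^2},
\end{equation*}
use $\saut M_n = s_{n-1}\ze_n$ and $a+V_n = a+V_{n-1}+s_{n-1}^2$, and bound each term. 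The cross term is a martingale increment times a $\G_{n-1}$-measurable factor, and the last term is controlled by~\eqref{eq:key_prop_ass}. The first term is where the monotonicity $a/(a+V_n)^2 \le a/(a+V_{n-1})^2$ enters: it lets me compare the quadratic at step $n$ with the one at step $n-1$ at the cost of a nonnegative discrepancy that I can again estimate using the subgaussian bound on $\ze_n$.

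The heart of the argument is therefore a one-step estimate of the form $\E[ e^{\la q_n - \la q_{n-1}} \mid \G_{n-1}] \le 1 + (\text{small predictable term})$, where $q_n := a M_n^2/(a+V_n)^2$; iterating this and using that the ``small terms'' telescope (again thanks to the $a/(a+V_n)^2$ being summable against $s_{n-1}^2$, since $\sum_k s_{k-1}^2/(a+V_k)^2 \le \sum_k (V_k - V_{k-1})/(a+V_{k-1})(a+V_k) \le 1/a$) yields a bound uniform in $n$, after which Fatou's lemma across $n\to\infty$ combined with the optional stopping at the \emph{finite} stopping-time $T$ gives~\eqref{eq:prop_psi2}. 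The explicit constants $\Gamma_\la = (1+2\gamma)/(2(\mu-\la))$ and the range $\la < \mu/(2(1+\gamma))$ should drop out of making the one-step bound rigorous: roughly, one needs $2\la\Gamma_\la<1$ for a Gaussian-type integral to converge, which is exactly $\la < \mu/(2(1+\gamma))$ after plugging in $\Gamma_\la$. For $\alpha=1$ the same scheme runs with the linear functional $\sqrt a M_n/(a+V_n)$ in place of the quadratic one; here one works with $\cosh$ rather than $\exp$ to symmetrize, since $\ze_n$ is only controlled through $\E[e^{\mu|\ze_n|}\mid\G_{n-1}]\le\gamma$, and the admissible range becomes the symmetric interval $(-\mu,\mu)$.

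The main obstacle I anticipate is getting the one-step inequality with the \emph{right} constants rather than merely some constants: naively bounding $e^{\la(\text{cross term})}$ by its conditional expectation loses the dependence on $\Gamma_\la$, so one has to carefully split the exponent, apply a conditional Cauchy--Schwarz (or a weighted Young inequality $xy \le \eta x^2/2 + y^2/(2\eta)$ with $\eta$ tuned to produce $\Gamma_\la$), and only then invoke~\eqref{eq:key_prop_ass}. Keeping track of how the correction terms from successive steps interact — in particular checking that the bound $\sum_k s_{k-1}^2/(a+V_k)^2 \le 1/a$ is genuinely enough to close the telescoping and that no term depends on $a$ in a way that blows up as $a\to 0$ or $a\to\infty$ — is the delicate bookkeeping part. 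Everything else (optional stopping for bounded stopping times, then a truncation $T\wedge m$ with $m\to\infty$ plus Fatou) is routine.
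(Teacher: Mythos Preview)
Your plan is essentially the paper's proof: set $Y_n = aM_n^2/(a+V_n)^2$, study the one-step factor $H_n := \E[e^{\lambda(Y_n-Y_{n-1})}\mid\G_{n-1}]$, bound it using the subgaussian assumption on $\zeta_n$, and close via $\sum_k a\,s_{k-1}^2/(a+V_k)^2 \le 1$ together with localization and Fatou. The paper obtains the one-step bound from a dedicated lemma $\E[e^{m\zeta^2+\rho\zeta}] \le \exp\bigl(\tfrac{(1+2\gamma)(\rho^2+m)}{2(\mu-m)}\bigr)$ for centered $\zeta$ with $\E[e^{\mu\zeta^2}]\le\gamma$, which is where $\Gamma_\lambda$ comes from; your Young/Cauchy--Schwarz route aims at the same estimate.

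There is, however, one point you misdescribe, and it is the crux for obtaining the stated constant. The ``discrepancy'' produced by the monotonicity of $a/(a+V_n)^2$ is not a nonnegative error to be controlled by the subgaussian bound --- it does not involve $\zeta_n$ at all --- but a \emph{negative} drift $-\delta_n$ with $\delta_n = \tfrac{2\lambda a\,s_{n-1}^2 M_{n-1}^2}{(a+V_n)^2(a+V_{n-1})}$. The paper's one-step inequality reads
\[
H_n \;\le\; \exp\Bigl[\tfrac{\lambda a\,s_{n-1}^2}{(a+V_n)^2}\Bigl(\Gamma_\lambda + \tfrac{2M_{n-1}^2}{a+V_{n-1}}\,(2\lambda\Gamma_\lambda-1)\Bigr)\Bigr],
\]
and since $2\lambda\Gamma_\lambda<1$ the second term is negative. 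This is precisely what closes the argument: after writing $e^{\lambda Y_n} = 1 + S_n + \sum_{k\le n} e^{\lambda Y_{k-1}}(H_k-1)$ with $S_n$ a local martingale, you must bound $\sum_k e^{\lambda Y_{k-1}}(H_k-1)$, and $e^{\lambda Y_{k-1}}$ is \emph{not} a priori bounded. The negative drift forces $\{H_k>1\}\subset\{Y_{k-1} < \Gamma_\lambda/(2(1-2\lambda\Gamma_\lambda))\}$, so on the set where $H_k-1>0$ the factor $e^{\lambda Y_{k-1}}$ is controlled by the explicit constant appearing in $c_\lambda$; on the complement the summand is $\le 0$. If you skip this and run the plain supermartingale $e^{\lambda Y_n}/\prod_k(1+\epsilon_k)$, you still get a uniform bound (roughly $e^{\lambda\Gamma_\lambda}$), but not the additive $1+c_\lambda$ of the statement. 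The $\alpha=1$ case follows the same pattern, with an elementary bound on $e^{A\eta}\cosh((1-\eta)z)-\cosh(z)$ playing the role of the negative-drift observation.
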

The proof of Theorem~\ref{thm:key} is given in
Section~\ref{sec:main_proofs}. Theorem~\ref{thm:key} shows that when
$\zeta_k$ is subgaussian (resp. sub-exponential) conditionally to
$\mathcal G_{k-1}$, then $\sqrt a |M_T| / (a + V_T)$ is also
subgaussian (resp. sub-exponential), hence the name
\emph{stability}. Indeed, we cannot expect an improvement in the tails
of $\sqrt a |M_T| / (a + V_T)$ due to the summation, since the
$s_{k-1}$ are arbitrary (for instance, it can be equal to zero for
every $k$ excepted for one).

\begin{remark}
  \label{rem:nottight}
  It is tempting to take ``$a = V_T$'' in
  Theorem~\ref{thm:key}. However, the following basic example shows
  that it is not possible. Take $(B_t)_{t \geq 0}$ a standard Brownian
  motion, consider $M_n = B_n$ and define the stopping time $T_c =
  \inf \{ n \geq 1 : B_n / \sqrt n \geq c \}$, where $c > 0$. For any
  $c > 0$, $T_c$ is finite a.s. (use the law of iterated logarithm for
  instance). So, in this example, one has $M_{T_c} / \sqrt{\langle M
    \rangle_{T_c}} = M_{T_c} / \sqrt{T_c} \geq c$, for any $c > 0$.
\end{remark}

\section{Consistency with the minimax theory of deterministic rates}
\label{sec:examples}

In this Section, we prove that, when $(X_k)_{k \geq 0}$ is mixing,
then Theorem~\ref{thm:upper_bound} gives the adaptive minimax upper
bound. Let us consider again sequences $(X_k)_{k\ge 0}$ and
$(Y_k)_{k\ge 1}$ of random variables satisfying \eqref{regmart}, where
$(\varepsilon_k)_{k \geq 0}$ an $(\F_k)_{k\ge 0}$-martingale
increment. For the sake of simplicity, we work under the following
simplified version of Assumption~\ref{hypmoments}.
\begin{assumption}
  \label{ass:hypmomentssimplified}
  There is a known $\sigma > 0$ and $\mu, \gamma > 0$ such
  that\textup:
  \begin{equation*}
    \E\Bigcro{\exp\Bigpar{\mu \frac{\ep_k^2} {\si^2} } \mid
      \F_{k-1}} \le \ga \quad \forall k \geq 1.
  \end{equation*}
\end{assumption}
Moreover, we consider the setting where we observe $(Y_1,\dots, Y_n)$
and $(X_0,\dots, X_{n-1})$, namely the stopping-time $N$ is simply
equal to $n$ (the results in this section are proved for $n$ large
enough). Note that in this setting, we have $L(h) = \sigma^{-2}
\sum_{k=1}^n \ind{|x_{k-1}-x| \leq h}$. We assume also that
$(X_k)_{k \geq 0}$ is a strictly stationary sequence.

\subsection{Some preliminaries}

A function $\ell : \mathbb R^+ \rightarrow \mathbb R^+ $ is slowly
varying if it is continuous and if
\begin{equation*}
  \lim_{h \rightarrow 0^+} \ell(y h) / \ell(h) = 1, \quad \forall y >
  0.
\end{equation*}
Fix $\tau \in \mathbb R$. A function $g : \mathbb R^+ \rightarrow
\mathbb R^+$ is $\tau$-regularly varying if $g(y) = y^\tau
\ell(y)$ for some slowly varying~$\ell$. Regular variation is a
standard and useful notion, of importance in extreme values theory for
instance. We refer to \cite{bgt89} on this topic.

Below we will use the notion of $\beta$-mixing to measure the
dependence of the sequence $(X_k)_{k \geq 0}$. This measure of
dependence was introduced by Kolmogorov, see \cite{MR0133175}, and we
refer to \cite{MR1312160} for topics on dependence. Introduce the
$\sigma$-field $\mathscr X_{u}^v = \sigma(X_k : u \leq k \leq v)$,
where $u, k, v$ are integers. A strictly stationary process $(X_k)_{k
  \in \mathbb Z}$ is called $\beta$-mixing or absolutely regular if
\begin{equation}
  \label{eq:beta_coef}
  \beta_q := \frac{1}{2} \sup \Big( \sum_{i=1}^I \sum_{j=1}^J \Big| \P[U_i \cap
  V_j] - \P[U_i] \P[V_j] \Big| \Big) \rightarrow 0 \text{ as } q \rightarrow
  +\infty,
\end{equation}
where the supremum is taken among all finite partitions
$(U_i)_{i=1}^I$ and $(V_j)_{j=1}^J$ of $\Omega$ that are, respectively,
$\mathscr X_{-\infty}^0$ and $\mathscr X_{q}^{+\infty}$
measurable. This notion of dependence is convenient in statistics
because of a coupling result by Berbee, see \cite{MR547109}, that
allows to construct, among $\beta$-mixing observations, independent
blocks, on which one can use Bernstein's or Talagrand's inequality
(for a supremum) for instance. This strategy has been adopted in a
series of papers dealing with dependent data, see \cite{MR1440142,
  MR1865343, MR1614587} among others. In this section, we use this
approach to give a deterministic equivalent to the random rate used in
Section~\ref{sec:main-results}. This allows to prove that
Theorem~\ref{thm:upper_bound} is consistent with the usual minimax
theory of deterministic rates, when one assumes that the sequence
$(X_k)_{k \geq 0}$ is $\beta$-mixing.

\subsection{Deterministic rates}

We assume that $f$ has H\"older-type smoothness in a neighbourhood of
$x$. Let us fix two constants $\delta_0, u_0 > 0$ and recall that
$h_0$ is the maximum bandwidth used in the Lepski's procedure (see
Section~\ref{sec:lepski_method}).

\begin{assumption}[Smoothness of $f$]
  \label{ass:fsmoothness}
  There is $0 < s \leq 1$ and a slowly varying function $\ell_w$ such
  that the following holds:
  \begin{equation*}
    \sup_{y : |y - x| \leq h} |f(y) - f(x)| \leq w(h), \text{ where }
    w(h):= h^s \ell_w(h)
  \end{equation*}
  for any $h \leq h_0$, $w$ is increasing on $[0, h_0]$, $w(h) \geq
  \delta_0 (h / h_0)^2$ and $w(h) \leq u_0$ for any $h \in [0,
  h_0]$.
\end{assumption}
This is slightly more general than an H\"older assumption because of
the slowly varying term $\ell_w$. The usual H\"older assumption is
recovered by taking $\ell_w \equiv r$, where $r > 0$ is some constant
(the radius in H\"older smoothness). 

Under Assumption~\ref{ass:fsmoothness}, one has that
\begin{equation*}
  \sup_{h' \in [H_{u_0}, h] \cap \mathcal H} |\tilde f(h') - f(x)|
  \leq w(h) \quad \forall h \in \mathcal H.
\end{equation*}
Under this assumption, one can replace $\bar W$ by $w$ in the
statement of Theorem~\ref{thm:upper_bound} and from the definition of
the oracle bandwidth $H^*$ (see~\eqref{eq:Hstar}). An oracle bandwidth
related to the modulus of continuity $w$ can be defined in the
following way: on the event
\begin{equation*}
  \Omega_0 = \{ L(h_0)^{-1/2} \leq w(h_0) \},
\end{equation*}
let us define
\begin{equation}
  \label{eq:Hw}
  H_w := \min \Big\{ h \in ]0, h_0]  : \Big( \frac{\psi(h)}{L(h)}
  \Big)^{1/2} \le w(h) \Big\}.
\end{equation}
Under some ergodicity condition (using $\beta$-mixing) on $(X_k)_{k
  \geq 0}$, we are able to give a deterministic equivalent to
$w(H_w)$. Indeed, in this situation, the occupation time $L(h)$
concentrates around its expectation $\E L(h)$, so a natural
deterministic equivalent to~\eqref{eq:Hw} is given by
\begin{equation}
  \label{eq:hw}
  h_w := \min \Big\{ h \in ]0, h_0] : \Big( \frac{\psi(h)}{\E L(h)}
  \Big)^{1/2} \le w(h) \Big\}.
\end{equation}
Note that $h_w$ is well defined and unique when $(\E L(h_0))^{-1/2}
\leq w(h_0)$, ie when $n \geq \sigma^2 / (P_X([x-h_0, x+h_0])
w(h_0)^2)$, where $P_X$ stands for the distribution of $X_0$. We are
able to give the behaviour of $h_w$ under the following assumption.

\begin{assumption}[Local behaviour of $P_X$]
  \label{ass:behavior-PX}
  There is $\tau \geq -1$ and a slowly varying function $\ell_X$
  such that 
  \begin{equation*}
    P_X( [x - h, x + h] ) = h^{\tau + 1} \ell_X(h) \quad \forall h
    \leq h_0.
  \end{equation*}
\end{assumption}
This is an extension of the usual assumption on $P_X$ which requires
that it has a continuous density $f_X$ wrt the Lebesgue measure such
that $f_X(x) > 0$ (see also \cite{MR2339297}). It is met when $f_X(y)
= c |y - x|^\tau$ for $y$ close to $x$ for instance (in this case
$\ell_X$ is constant).

\begin{lemma}
  \label{lem:hw}
  Grant Assumptions~\ref{ass:fsmoothness}
  and~\ref{ass:behavior-PX}. Then $h_w$ is well defined
  by~\eqref{eq:hw} and unique when $n$ is large enough and such that
  \begin{equation*}
    h_w = (\sigma^2 / n)^{1 / (2s + \tau + 1)} \ell_1(\sigma^2 / n)
    \text{ and } w(h_w) = (\sigma^2 / n)^{s / (2s + \tau + 1)}
    \ell_2(\sigma^2 / n),
  \end{equation*}
  where $\ell_1$ and $\ell_2$ are slowly varying functions that depend
  on $s, \tau$ and $\ell_X$, $\ell_w$.
\end{lemma}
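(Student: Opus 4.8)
The plan is to treat the defining relation of $h_w$ in~\eqref{eq:hw} as an implicit equation and solve it asymptotically using the machinery of regularly varying functions, in particular the Karamata-type inversion results from~\cite{bgt89}. First I would rewrite the defining inequality: since $\E L(h) = \sigma^{-2} n\, P_X([x-h,x+h]) = \sigma^{-2} n\, h^{\tau+1}\ell_X(h)$ by Assumption~\ref{ass:behavior-PX}, the condition $(\psi(h)/\E L(h))^{1/2} \le w(h)$ reads
\begin{equation*}
  \frac{\sigma^2 \psi(h)}{n\, h^{\tau+1}\ell_X(h)} \le h^{2s}\ell_w(h)^2,
  \qquad\text{i.e.}\qquad
  \sigma^2 / n \le \frac{h^{2s+\tau+1}\ell_w(h)^2\ell_X(h)}{\psi(h)}.
\end{equation*}
Write $G(h) := h^{2s+\tau+1}\ell_w(h)^2\ell_X(h)/\psi(h)$. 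Since $\psi(h) = 1 + b\log(h_0/h)$ is slowly varying as $h\to 0^+$ (a logarithm is the prototypical slowly varying function), $G$ is $(2s+\tau+1)$-regularly varying at $0^+$ with $2s+\tau+1 > 0$ (using $s>0$ and $\tau\ge -1$). In particular $G$ is eventually continuous and strictly increasing near $0$, with $G(h)\to 0$ as $h\to 0^+$; hence for $n$ large the equation $G(h) = \sigma^2/n$ has a unique solution, which is exactly $h_w$ (the $\min$ in~\eqref{eq:hw} is attained, and monotonicity makes it the unique crossing point). This also pins down the ``$n$ large enough'' threshold: one needs $\sigma^2/n$ below $G(h_0)$, i.e. essentially the condition $(\E L(h_0))^{-1/2}\le w(h_0)$ already noted before the lemma.

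Next I would invert. By the theory of regularly varying functions (see~\cite[Ch.~1]{bgt89}), a $\rho$-regularly varying function with $\rho>0$ admits an asymptotic inverse which is $(1/\rho)$-regularly varying; applying this to $G$ at $0^+$ gives $h_w = G^{\leftarrow}(\sigma^2/n) = (\sigma^2/n)^{1/(2s+\tau+1)}\ell_1(\sigma^2/n)$ for a slowly varying $\ell_1$ built from $\ell_w,\ell_X$ and the logarithmic factor $\psi$ (the $\psi$ contributes only to $\ell_1$, since $\log$ is absorbed into the slowly varying part). Substituting back, $w(h_w) = h_w^s\ell_w(h_w) = (\sigma^2/n)^{s/(2s+\tau+1)}\ell_1(\sigma^2/n)^s\,\ell_w(h_w)$; and since $h_w$ is itself regularly varying in $\sigma^2/n$ with positive index, $h\mapsto \ell_w(h)$ composed with $h_w$ is again slowly varying in $\sigma^2/n$ (composition of a slowly varying function with a regularly varying one of positive index is slowly varying), so $w(h_w) = (\sigma^2/n)^{s/(2s+\tau+1)}\ell_2(\sigma^2/n)$ with $\ell_2 := \ell_1^s\cdot(\ell_w\circ h_w)$ slowly varying and depending only on $s,\tau,\ell_X,\ell_w$. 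This yields the two claimed asymptotics.

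The main obstacle, I expect, is bookkeeping the slowly varying factors cleanly: one must verify (i) that $\psi(h)=1+b\log(h_0/h)$ is indeed slowly varying at $0^+$ and that dividing by it preserves regular variation; (ii) that the asymptotic-inversion theorem applies despite $G$ only being \emph{eventually} monotone/continuous (so all statements are for $n$ large, $h$ small, which matches the lemma's phrasing); and (iii) that composing $\ell_w$ with the regularly-varying map $n\mapsto h_w$ produces a genuinely slowly varying function of $\sigma^2/n$ — this is where the uniform convergence theorem for slowly varying functions (Karamata) is needed, rather than just the pointwise definition. None of these is deep, but getting the dependence of $\ell_1,\ell_2$ to be exactly ``on $s,\tau,\ell_X,\ell_w$'' (and not on $b,h_0$ in an essential way) requires care: the dependence on $b$ and $h_0$ enters only through lower-order slowly varying perturbations, which I would note explicitly. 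I would not belabor the explicit form of $\ell_1,\ell_2$, since the lemma only asserts their existence and slow variation.
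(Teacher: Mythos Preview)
Your proposal is correct and matches the paper's intended approach: the paper explicitly omits the proof, stating only that it ``easily follows from basic properties of regularly varying functions,'' which is precisely the Karamata-type inversion argument you outline. Your bookkeeping of the slowly varying factors (including the role of $\psi$ and the composition $\ell_w\circ h_w$) is more careful than anything the paper spells out, and your observation that $b,h_0$ enter the constants through $\psi$ is a fair point the authors elide.
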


The proof of this lemma easily follows from basic properties of
regularly varying functions, so it is omitted. Explicit examples of
such rates are given in \cite{MR2339297}. Note that in the
i.i.d. regression setting, we know from \cite{MR2339297} that $w(h_w)$
is the minimax adaptive rate of convergence. Now, under the following
mixing assumption, we can prove that the random rate $w(H_w)$ and the
deterministic rate $w(h_w)$ have the same order of magnitude with a
large probability.

\begin{assumption}
  \label{ass:Xbeta-mixing}
  Let $(\beta_q)_{q \geq 1}$ be the sequence of $\beta$-mixing
  coefficients of $(X_k)_{k \geq 0}$, see~\eqref{eq:beta_coef}, and
  let $\eta, \kappa > 0$. We assume that for any $q \geq 1$:
  \begin{equation*}
    \beta_q \leq \frac{1}{\psi^{-1}(2 q)},
  \end{equation*}
  where $\psi(u) = \eta (\log u)^\kappa$ (geometric mixing) or
  $\psi(u) = \eta u^\kappa$ (arithmetic mixing).
\end{assumption}

\begin{prop}
  \label{prop:det-rate}
  Let Assumptions~\ref{ass:fsmoothness}, \ref{ass:behavior-PX}
  and~\ref{ass:Xbeta-mixing} hold. On $\Omega_0$, let $H_w$ be given
  by~\eqref{eq:Hw} and let (for $n$ large enough) $h_w$ be given
  by~\eqref{eq:hw}. Then, if $(X_k)$ is geometrically $\beta$-mixing,
  or if it is arithmetically $\beta$-mixing with a constant $\kappa <
  2s / (\tau + 1)$, we have
  \begin{align*}
    \P \Big[ \Big\{ \frac{w(h_w)}{4} \leq w(H_w) \leq 4 w(h_w) \Big\}
    \cap \Omega_0 \Big] \geq 1 - \phi_n \;\; \text{ and } \;\; \P [
    \Omega_0^\complement ] = o(\varphi_n)
  \end{align*}
  for $n$ large enough, where in the geometrically $\beta$-mixing
  case:
  \begin{equation*}
    \phi_n = \exp( -C_1 n^{\delta_1} \ell_1(1 / n))
    \text{ where } \delta_1 = \frac{2 s}{(2s + \tau + 1) (\kappa+1)}
  \end{equation*}
  and in the arithmetically $\beta$-mixing case:
  \begin{equation*}
    \phi_n = C_2 n^{-\delta_2} \ell_2(1/n) \text{ where } \delta_2 =
    \frac{2s}{2s + \tau +1} \Big( \frac1\kappa - \frac{\tau + 1}{2s}
    \Big),
  \end{equation*}
  where $C_1, C_2$ are positive constants and $\ell_1, \ell_2$ are
  slowly varying functions that depends on $\eta, \kappa, \tau, s,
  \sigma$ and $\ell_X$, $\ell_w$.
\end{prop}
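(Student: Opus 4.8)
The plan is to deduce everything from the concentration of the occupation time $L(h)=\sigma^{-2}\sum_{k=1}^n\ind{|X_{k-1}-x|\le h}$ around its mean $\E L(h)=\sigma^{-2}n\,P_X([x-h,x+h])=\sigma^{-2}n\,h^{\tau+1}\ell_X(h)$, evaluated at the \emph{deterministic} points $h_0$, $\rho h_w$ and $\rho^{-1}h_w$, where $\rho\in(0,1)$ is a constant chosen close enough to $1$ that, by $s$‑regular variation of $w$, both $w(\rho h_w)/w(h_w)$ and $w(\rho^{-1}h_w)/w(h_w)$ lie in $[1/4,4]$ for $n$ large. First I would record that $g(h):=(\psi(h)/L(h))^{1/2}-w(h)$ is right‑continuous and non‑increasing (as $h$ grows, $\psi(h)=1+b\log(h_0/h)$ decreases while $L(h)$ and $w(h)$ increase), so the minimum in~\eqref{eq:Hw} defining $H_w$ is attained; likewise $g_{\mathrm{det}}(h):=(\psi(h)/\E L(h))^{1/2}-w(h)$ is continuous, strictly decreasing, and vanishes exactly at $h_w$. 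Since $\psi$ is slowly varying while $h\mapsto\E L(h)$ is $(\tau+1)$‑regularly varying with $\tau+1\ge0$ and $w$ is $s$‑regularly varying with $s>0$, the zero of $g_{\mathrm{det}}$ at $h_w$ is transversal: by Lemma~\ref{lem:hw} and elementary regular‑variation estimates, for $n$ large
\[
  g_{\mathrm{det}}(\rho h_w)\ \ge\ c\,w(h_w)\ >\ 0,\qquad g_{\mathrm{det}}(\rho^{-1}h_w)\ \le\ -\,c\,w(h_w)\ <\ 0,
\]
with $c=c(\rho,s,\tau)>0$. Hence on any event on which $|L(h)-\E L(h)|\le\delta\,\E L(h)$ at $h\in\{\rho h_w,\rho^{-1}h_w\}$, for a small enough constant $\delta=\delta(\rho,s,\tau)>0$ (so that the multiplicative error made in replacing $\E L$ by $L$ does not close the gap above), one gets $g(\rho^{-1}h_w)\le0$ and $g(\rho h_w)>0$; monotonicity of $g$ then forces $\rho h_w\le H_w\le\rho^{-1}h_w$, whence $w(h_w)/4\le w(H_w)\le4\,w(h_w)$ for $n$ large. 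The estimate $\P[\Omega_0^\complement]=o(\phi_n)$ comes from the same concentration applied at $h_0$, where $\E L(h_0)\asymp n$ dwarfs the fixed threshold $w(h_0)^{-2}$.

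For the concentration I would combine Berbee's coupling lemma \cite{MR547109} with Bernstein's inequality. Writing $L(h)=\sigma^{-2}S(h)$ with $S(h)=\sum_{k=1}^n\ind{|X_{k-1}-x|\le h}$, partition $\{1,\dots,n\}$ into $2p_n$ consecutive blocks of length $\ell_n$ (so $n\asymp2p_n\ell_n$) and split $S(h)$ into the sum over odd and the sum over even blocks. By Berbee's lemma each of these two families of block‑sums can be coupled with an i.i.d.\ family at total‑variation cost at most $p_n\beta_{\ell_n}$. On an i.i.d.\ family the block‑sums $Z_j$ satisfy $0\le Z_j\le\ell_n$, $\sum_j\E Z_j\asymp\E S(h)$ and $\sum_j\var(Z_j)\le\ell_n\sum_j\E Z_j$, so Bernstein's inequality gives, for $\delta\le1$,
\[
  \P\bigl[\,|S(h)-\E S(h)|>\delta\,\E S(h)\,\bigr]\ \le\ 4\,p_n\beta_{\ell_n}\ +\ C\exp\!\Bigl(-\,c\,\delta^2\,\tfrac{\E S(h)}{\ell_n}\Bigr).
\]
By Lemma~\ref{lem:hw}, at $h\in\{\rho h_w,\rho^{-1}h_w\}$ one has $\E S(h)=\sigma^2\E L(h)\asymp m_n$ up to a slowly varying factor, where $m_n:=(n/\sigma^2)^{2s/(2s+\tau+1)}$; so the bound reads $4p_n\beta_{\ell_n}+C\exp(-c\,m_n/\ell_n)$, and everything comes down to the choice of $\ell_n$.

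This choice is the crux of the argument, and it is where the exponents $\delta_1,\delta_2$ appear. In the geometric case Assumption~\ref{ass:Xbeta-mixing} gives $\beta_{\ell_n}\le\exp(-(2\ell_n/\eta)^{1/\kappa})$; equating the two competing exponents $(2\ell_n/\eta)^{1/\kappa}$ (coupling) and $m_n/\ell_n$ (Bernstein) forces $\ell_n$ of order $n^{\kappa\delta_1}$ up to a slowly varying factor and produces a common bound $\exp(-C_1 n^{\delta_1}\ell_1(1/n))$ with $\delta_1=\tfrac{2s}{(2s+\tau+1)(\kappa+1)}$ (note $\kappa\delta_1<\tfrac{2s}{2s+\tau+1}$, so $\ell_n=o(m_n)$). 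In the arithmetic case $\beta_{\ell_n}\le\eta^{1/\kappa}(2\ell_n)^{-1/\kappa}$, so $p_n\beta_{\ell_n}$ can decay only polynomially; taking $\ell_n$ of order $m_n/\log n$ makes the Bernstein term negligible and gives $p_n\beta_{\ell_n}\lesssim n\,\ell_n^{-(1+1/\kappa)}\asymp n^{-\delta_2}\ell_2(1/n)$ with $\delta_2=\tfrac{2s}{2s+\tau+1}\bigl(\tfrac1\kappa-\tfrac{\tau+1}{2s}\bigr)$, which is positive precisely under the assumed $\kappa<2s/(\tau+1)$. A union bound over the two points $\rho h_w,\rho^{-1}h_w$ then yields the first displayed inequality, and one more application at $h_0$ (where the wide margin $\E L(h_0)\asymp n\gg w(h_0)^{-2}$ makes the resulting bound negligible compared with $\phi_n$) gives $\P[\Omega_0^\complement]=o(\phi_n)$; all the slowly varying factors are absorbed into $\ell_1,\ell_2$. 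The routine parts are the regular‑variation estimates (the transversality gap, the comparison of $w$ and of $\E L$ at comparable arguments, Lemma~\ref{lem:hw}) and the verification of Bernstein's hypotheses after the coupling; I expect the genuinely delicate point to be precisely the optimization of $\ell_n$ — balancing the coupling error $p_n\beta_{\ell_n}$ against $\exp(-c\,m_n/\ell_n)$ so that their sum is of exactly the order claimed while keeping $\ell_n=o(m_n)$.
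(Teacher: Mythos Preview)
Your proposal is correct and follows essentially the same route as the paper: Berbee's coupling to create independent blocks, Bernstein's inequality on the block sums, and then the same optimisation of the block length that produces the exponents $\delta_1,\delta_2$. The only cosmetic difference is in the sandwiching step: the paper packages it as a separate lemma (Lemma~\ref{lem:bandwdith-embedding}) showing that concentration of $L(h_w)$ at the \emph{single} point $h_w$ already forces $H_w\in[(1-\epsilon)h_w,(1+\epsilon)h_w]$, whereas you evaluate the concentration at the two flanking points $\rho h_w,\rho^{-1}h_w$ and invoke monotonicity of $g$; both arguments are equivalent and yield the same bound.
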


The proof of Proposition~\ref{prop:det-rate} is given in
Section~\ref{sec:main_proofs} below. The assumption used in
Proposition~\ref{prop:det-rate} allows a geometric $\beta$-mixing, or
an arithmetic $\beta$-mixing, up to a certain order, for the sequence
$(X_k)$. This kind of restriction on the coefficient of arithmetic
mixing is standard, see for instance \cite{MR1614587,MR1440142,
  MR1865343}.

The next result is a direct corollary of Theorem~\ref{thm:upper_bound}
and Proposition~\ref{prop:det-rate}. It says that when $(X_k)_{k \geq
  0}$ is mixing, then the deterministic rate $w(h_w)$ is an upper
bound for the risk of $\hat f(\hat H)$. 

\begin{corollary}
  \label{cor:det-upper-bound}
  Let Assumptions~\ref{ass:hypmomentssimplified},
  \ref{ass:fsmoothness} and \ref{ass:behavior-PX} hold. Let
  Assumption~\ref{ass:Xbeta-mixing} hold, with the extra assumption
  that $\kappa < 2s / (s + \tau +1)$ in the arithmetical
  $\beta$-mixing case. Moreover, assume that $|f(x)| \leq Q$ for some
  known constant $Q > 0$. Let us fix $p > 0$.  If $\nu > 0$ satisfies
  $b \mu \nu^2 > 128 p (1 + \tau) $ (recall that $\nu$ is the constant
  in front the threshold in the Lepski's procedure,
  see~\eqref{eq:hatH}) then we have
  \begin{equation*}
    \E [ |\tilde f (\hat H) - f(x)|^p ] \leq C_1 w(h_w)^p
  \end{equation*}
  for $n$ large enough, where $\tilde f (\hat H) = -Q \vee \hat f
  (\hat H) \wedge Q$ and where $C_1 > 0$ depends on $q, p, s, \mu,
  \ga, b, u_0, \delta_0, \nu, Q$.
\end{corollary}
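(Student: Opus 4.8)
The plan is to combine the non-asymptotic deviation bound of Theorem~\ref{thm:upper_bound} with the comparison of rates provided by Proposition~\ref{prop:det-rate}, the truncation $\tilde f(\hat H) = -Q \vee \hat f(\hat H) \wedge Q$ being used only to neutralise the contribution of the event on which the random rate misbehaves. First I would specialise Theorem~\ref{thm:upper_bound}: under Assumption~\ref{ass:fsmoothness} one takes the smoothness envelope $W$ equal to $w$, and since $w(h) \ge \delta_0 (h/h_0)^2$ and $w(h) \le u_0$ on $[0,h_0]$, this gives $\bar W = w$, $\alpha_0 = 2$, $\Omega' = \Omega_0 = \{L(h_0)^{-1/2} \le w(h_0)\}$, and the random oracle bandwidth $H^*$ of~\eqref{eq:Hstar} is the grid point immediately above $H_w$ from~\eqref{eq:Hw}; since the grid is geometric with ratio $q$ and $w$ is regularly varying, $w(H_w) \le w(H^*) \le c_q\, w(H_w)$ for $n$ large, with $c_q$ depending only on $q$ and $s$. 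Theorem~\ref{thm:upper_bound} then reads, for every admissible $\rho$ and every $t \ge t_0$,
\[
\P\Bigcro{\bigacc{\bigabs{\hat f(\hat H) - f(x)} \ge c_q t\, w(H_w)} \cap \Omega_0} \;\le\; \P\Bigcro{\bigacc{\bigabs{\hat f(\hat H) - f(x)} \ge t\, w(H^*)} \cap \Omega'} \;\le\; C_0 \frac{(\log(t+1))^{1 + \rho/2}}{t^\rho},
\]
and the $p$-dependent lower bound assumed on $\nu$ is exactly what guarantees that one may fix some $\rho > p$ in the admissible range of Theorem~\ref{thm:upper_bound}; I do so from now on.

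Next I would bound the contribution of the event $A := \bigacc{w(h_w)/4 \le w(H_w) \le 4 w(h_w)} \cap \Omega_0$ on which the random rate is comparable to the deterministic one. Projecting onto $[-Q,Q] \ni f(x)$ is $1$-Lipschitz, so $\bigabs{\tilde f(\hat H) - f(x)} \le 2Q \wedge \bigabs{\hat f(\hat H) - f(x)}$, and by the layer-cake formula $\E\bigcro{\bigabs{\tilde f(\hat H) - f(x)}^p \ind{A}} \le \int_0^{2Q} p\, r^{p-1}\, \P\bigcro{\bigabs{\hat f(\hat H) - f(x)} \ge r,\, A}\, dr$. On $A$ one has $w(H_w) \le 4 w(h_w)$, so for $r \ge 4 c_q t_0 w(h_w)$ (which is $\le 2Q$ for $n$ large) the displayed inequality of the previous paragraph applies with $t = r/(4 c_q w(h_w)) \ge t_0$; bounding the integrand by $1$ below this threshold and by $C_0 (\log(t+1))^{1+\rho/2}/t^\rho$ above it and changing variables $r = 4 c_q w(h_w) u$, the surviving integral becomes $(4 c_q w(h_w))^p \int_{t_0}^{\infty} p\, u^{p-1-\rho} (\log(u+1))^{1+\rho/2}\, du$, which converges precisely because $\rho > p$. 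This yields $\E\bigcro{\bigabs{\tilde f(\hat H) - f(x)}^p \ind{A}} \le C\, w(h_w)^p$ with $C$ depending only on $q, p, s, \mu, \gamma, b, u_0, \delta_0, \nu$.

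Then I would treat $A^\complement$ by the crude bound $\bigabs{\tilde f(\hat H) - f(x)} \le 2Q$ (with the convention $\tilde f(\hat H) := 0$ on the negligible set where $\hat f(\hat H)$ is undefined). Since $A^\complement \subset \Omega_0^\complement \cup \bigpar{\Omega_0 \cap \{w(h_w)/4 \le w(H_w) \le 4 w(h_w)\}^\complement}$, Proposition~\ref{prop:det-rate} gives $\P[A^\complement] \le \phi_n + o(\phi_n)$, hence $\E\bigcro{\bigabs{\tilde f(\hat H) - f(x)}^p \ind{A^\complement}} \le 2 (2Q)^p \phi_n$ for $n$ large. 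It then remains to check that $\phi_n = o\bigpar{w(h_w)^p}$: by Lemma~\ref{lem:hw}, $w(h_w)^p$ decays polynomially in $n$, with exponent $ps/(2s+\tau+1)$ up to a slowly varying factor, whereas $\phi_n$ is stretched-exponentially small in the geometric case, and in the arithmetic case one needs the exponent $\delta_2 = \tfrac{2s}{2s+\tau+1}\bigpar{\tfrac1\kappa - \tfrac{\tau+1}{2s}}$ of $\phi_n$ to exceed $\tfrac{ps}{2s+\tau+1}$, which is exactly what the restriction assumed on $\kappa$ secures; so again $\phi_n = o\bigpar{w(h_w)^p}$. Adding the contributions of the last two paragraphs gives $\E\bigabs{\tilde f(\hat H) - f(x)}^p \le C_1\, w(h_w)^p$ for $n$ large, with $C_1$ depending on $q, p, s, \mu, \gamma, b, u_0, \delta_0, \nu, Q$.

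I expect the only genuinely delicate point to be the simultaneous matching of constants: the estimate on $A$ requires the admissible range for $\rho$ in Theorem~\ref{thm:upper_bound} to reach beyond $p$ (this is the role of the $p$-dependent lower bound on $\nu$), while the estimate on $A^\complement$ requires the deviation probability $\phi_n$ of Proposition~\ref{prop:det-rate} to be dominated by the polynomial rate $w(h_w)^p$ (this is the role of the restriction on the arithmetic-mixing exponent $\kappa$), and in the arithmetic-mixing regime these two constraints must be reconciled together. A secondary, more routine nuisance is that the oracle bandwidth $H^*$ lives on the discrete geometric grid whereas $H_w$ is a genuine positive real, so one has to carry the slowly varying comparison $w(H^*) \asymp w(H_w)$ — and the fact that its constant $c_q$ involves neither $\tau$ nor the slowly varying functions $\ell_X, \ell_w$ — through every estimate.
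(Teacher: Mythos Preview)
Your proof follows the paper's own argument essentially step for step: specialise Theorem~\ref{thm:upper_bound} with $\bar W = w$ and $\alpha_0 = 2$, compare the grid oracle $H^*$ to $H_w$ via regular variation of $w$, integrate the tail bound on the good event where the random rate is comparable to $w(h_w)$, and kill the complement with the truncation and Proposition~\ref{prop:det-rate}. The only cosmetic difference is your choice of the good event $A$ (you take the event appearing in Proposition~\ref{prop:det-rate} directly, the paper takes $\{H_w\le 2h_w\}\cap\Omega_0$), which changes nothing.

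One arithmetic slip worth flagging: the restriction $\kappa < 2s/(s+\tau+1)$ gives only $\delta_2 > s/(2s+\tau+1)$, hence $\phi_n = o\bigl(w(h_w)\bigr)$, \emph{not} $\phi_n = o\bigl(w(h_w)^p\bigr)$ for arbitrary $p>1$ as you assert. The paper's own proof makes exactly the same claim (it writes ``$\varphi_n = o(w(h_w))$'' and stops), so your argument is precisely as complete as the original --- but your sentence ``which is exactly what the restriction assumed on $\kappa$ secures'' overstates what the hypothesis actually delivers.
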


The proof of Corollary~\ref{cor:det-upper-bound} is given in
Section~\ref{sec:main_proofs} below. Let us recall that in the i.i.d
regression model with gaussian noise, we know from~\cite{MR2339297}
that $w(h_w)$ is the minimax adaptive rate of convergence. So,
Corollary~\ref{cor:det-upper-bound} proves that
Theorem~\ref{thm:upper_bound} is consistent with the minimax theory of
deterministic rates, when $(X_k)$ is $\beta$-mixing.

\begin{example}
  Assume that $f$ is $s$-H\"older, ie Assumption~\ref{ass:fsmoothness}
  holds with $w(h) = L h^s$ so $\ell_w(h) \equiv L$ and assume that
  $P_X$ has a density $f_X$ which is continuous and bounded away from
  zero on $[x - h_0, x + h_0]$, so that
  Assumption~\ref{ass:behavior-PX} is satisfied with $\tau = 0$. In
  this setting, one easily obtains that $w(h_w)$ is equal (up to some
  constant) to $(\log n / n)^{s / (2s + 1)}$, which is the pointwise
  minimax adaptive rate of convergence, see
  \cite{lepski_spok97,lepski_92,lepski_mammen_spok_97} for the
  white-noise model and \cite{MR2339297} for the regression model.
\end{example}

\section{Proof of the main results}
\label{sec:main_proofs}

\subsection{Proof of Theorem~\ref{thm:key} for $\alpha = 2$}

Let $a > 0$ and $\la \in [0, \frac{\mu}{2(1 + \gamma)} )$. Define $Y_0
:= 0$ and for $n \geq 1$:
\begin{equation*}
  Y_n := \frac{a M_n^2}{(a + V_n)^2} \;\text{ and }\;
  H_n := \E\bigcro{\exp\bigpar{\la(Y_{n} - Y_{n-1})} \mid
    \G_{n-1}}.
\end{equation*}
Assume for the moment that $H_n$ is finite a.s, hence we can define
the local martingale
\begin{equation*}
  S_n := \sum_{k = 1}^n e^{\la Y_{k-1}} \bigpar{e^{\la
      (Y_k-Y_{k-1})}-H_k},
\end{equation*}
so that
\begin{align*}
  \exp(\la Y_n) &= 1 + \sum_{k=1}^n e^{\la Y_{k-1}}
  \bigpar{e^{\la (Y_k-Y_{k-1})}-1}  \\
  &=1 + S_n + \sum_{k=1}^n e^{\la Y_{k-1}} (H_k-1).
\end{align*}
Using the sequence of localizing stopping times
\begin{equation*}
  T_p := \min \Big\{n \geq 0 : \sum_{k=1}^{n+1} \E( e^{\lambda Y_k}
  | \mathcal G_{k-1} ) > p \Big\}
  \end{equation*}
  for $p > 0$, the process $(S_{n \wedge T_p})_{n \geq 0}$ is a
  uniformly integrable martingale. So using Fatou's Lemma, one easily
  gets that
  \begin{align*}
    \E( e^{\lambda Y_T} ) \leq \liminf_{p \rightarrow +\infty} \E(
    e^{\lambda Y_{T \wedge T_p}}) &\leq \liminf_{p \rightarrow
      +\infty} \Big\{ 1 + \E( S_{T \wedge T_p}) + \E\Big( \sum_{k=1}^{T
      \wedge T_p} e^{\la Y_{k-1}} (H_k - 1) \Big) \Big\} \\
    &= 1 + \liminf_{p \rightarrow +\infty} \E\Big( \sum_{k=1}^{T
      \wedge T_p} e^{\la Y_{k-1}} (H_k - 1) \Big).
  \end{align*}
  This entails (\ref{eq:prop_psi2}) if we prove that
  \begin{equation}
    \label{majf}
    \sum_{i=1}^n e^{\la Y_{k-1}} (H_k-1) \leq c_{\lambda}
  \end{equation}
  for all $n \ge 1$. First, we prove that
  \begin{equation}
    \label{majH} 
    H_n \le \exp\Bigcro{ \frac{\la a
        s^2_{n-1}} {(a+V_{n})^2} \Bigpar{ \Ga_{\la} + \frac{2
          M_{n-1}^2}{a+V_{n-1}} (2\la\Ga_\la -1) } },
  \end{equation}
  which entails that $H_n$ is finite almost surely.  We can write
  \begin{align*}
    Y_n-Y_{n-1} &= a \frac{M_n^2-M_{n-1}^2}{\bigpar{a + V_n}^2} + a
    M_{n-1}^2 \frac{ \bigpar{a + V_{n-1}}^2-\bigpar{a + V_{n}}^2 }
    { \bigpar{a + V_n}^2 \bigpar{a + V_{n-1}}^2} \\
    &= a \frac{\bigpar{M_n-M_{n-1}}^2 + 2
      M_{n-1}\bigpar{M_n-M_{n-1}}}{\bigpar{a + V_n}^2} \\
    &\quad \quad - \frac{ a M_{n-1}^2 s_{n-1}^2 (2a + V_{n-1}+ V_{n}) }
    { \bigpar{a + V_n}^2 \bigpar{a + V_{n-1}}^2} \\
    &\le \frac{a\bigpar{ s_{n-1}^2 \ze_n^2 + 2M_{n-1}s_{n-1}\ze_n}}
    {\bigpar{a + V_n}^2} -\frac{2aM_{n-1}^2 s_{n-1}^2} { \bigpar{a +
        V_n}^2 \bigpar{a + V_{n-1}}}
  \end{align*}
  where we used that $V_{n-1}\le V_{n}$. In other words
  \begin{equation*}
    \exp\bigpar{\la(Y_n-Y_{n-1})} \le \exp\big( \mu_n \ze_n^2 +
    \rho_n \ze_n - \de_n \big),
  \end{equation*}
  with:
  \begin{equation*}
    \mu_n=\frac{\la a s_{n-1}^2}{\bigpar{a+ V_n}^2}, \quad
    \rho_n= \frac{2\la a s_{n-1} M_{n-1}}{\bigpar{a+ V_n}^2}, \quad
    \de_n=\frac{2\la a s^2_{n-1} M_{n-1}^2}{\bigpar{a + V_n}^2 \bigpar{a
        + V_{n-1}}}.
  \end{equation*}
  The random variables $\mu_n$, $\rho_n$ and $\de_n$ are
  $\G_{n-1}$-measurable and one has $0 \le \mu_n \le \la$.  We need
  the following Lemma.
  \begin{lemma}
    \label{l1} 
    Let $\ze$ be a real random variable such that $\E[\ze] = 0$ and
    such that
    \begin{equation*}
      \mathbb E[ \exp( \mu \zeta^2 ) ] \leq \gamma
    \end{equation*}
    for some $\mu > 0$ and $\gamma > 1$. Then, for any $\rho \in
    \mathbb R$ and $m \in [0, \mu)$\textup, we have
    \begin{equation*}
      \E[e^{m \ze^2 + \rho\ze}] \le \exp \Big( \frac{(1 + 2
        \gamma)(\rho^2 + m) }{2(\mu - m)} \Big).
    \end{equation*}
  \end{lemma}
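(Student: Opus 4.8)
The plan is to exploit the hypothesis $\E[\zeta]=0$ through a symmetrization carried out \emph{before} integrating, after which everything reduces to Jensen's inequality and two elementary one-variable estimates. The starting point is the pointwise bound $e^y\le 1+y+\frac{y^2}{2}e^{|y|}$, valid for every $y\in\R$: for $y\ge 0$ it is $e^y-1-y=y^2\sum_{j\ge 0}y^j/(j+2)!\le\frac{y^2}{2}\sum_{j\ge 0}y^j/j!=\frac{y^2}{2}e^y$ (since $(j+2)!\ge 2\cdot j!$), and for $y\le 0$ it follows from $e^y\le 1+y+\frac{y^2}{2}$, i.e., from $x\mapsto 1+x+\frac{x^2}{2}-e^x$ vanishing at $0$ and having nonpositive derivative on $(-\infty,0]$. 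Using this with $y=\rho\zeta$ and with $y=-\rho\zeta$, multiplying both by $e^{m\zeta^2}>0$ and adding, I obtain pointwise
\[
e^{m\zeta^2+\rho\zeta}+e^{m\zeta^2-\rho\zeta}\le 2e^{m\zeta^2}+\rho^2\zeta^2 e^{m\zeta^2+|\rho\zeta|}.
\]
Taking expectations — all finite, since $\mu\zeta^2\le e^{\mu\zeta^2}$ already gives $\E[\zeta^2]\le\gamma/\mu<\infty$ — and using that $\E[e^{m\zeta^2-\rho\zeta}]\ge e^{m\E[\zeta^2]}\ge 1$ by Jensen's inequality together with $\E[\zeta]=0$, I get
\[
\E[e^{m\zeta^2+\rho\zeta}]\le 2\E[e^{m\zeta^2}]-1+\rho^2\E[\zeta^2 e^{m\zeta^2+|\rho\zeta|}].
\]
This is the heart of the matter; it is essential that the symmetrization precedes integration, otherwise the linear term $\rho\,\E[\zeta e^{m\zeta^2}]$, which is nonzero when $m>0$, would remain uncontrolled.

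Next I would bound the two terms on the right by $e^{(1+2\gamma)m/(2(\mu-m))}$ and by $e^{(1+2\gamma)m/(2(\mu-m))}\bigl(e^{(1+2\gamma)\rho^2/(2(\mu-m))}-1\bigr)$, so that their sum is exactly $\exp\bigl(\frac{(1+2\gamma)(\rho^2+m)}{2(\mu-m)}\bigr)$. For the first term, $2x-1\le x^2$ and Jensen's inequality applied to the concave map $x\mapsto x^{m/\mu}$ (recall $m/\mu\in[0,1)$) give $2\E[e^{m\zeta^2}]-1\le(\E[e^{m\zeta^2}])^2\le\gamma^{2m/\mu}$, which is $\le e^{(1+2\gamma)m/(2(\mu-m))}$ because $4\log\gamma\le 1+2\gamma$ (the function $\gamma\mapsto 1+2\gamma-4\log\gamma$ has minimum $5-4\log 2>0$, at $\gamma=2$). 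For the second term, Young's inequality in the form $|\rho\zeta|\le\frac{\rho^2}{2(\mu-m)}+\frac{\mu-m}{2}\zeta^2$ — the weight $\mu-m$ being chosen to produce the exponential factor $\rho^2/(2(\mu-m))$ required by the target — leaves $\zeta^2 e^{(\mu+m)\zeta^2/2}=\zeta^2 e^{-(\mu-m)\zeta^2/2}e^{\mu\zeta^2}\le\frac{2}{e(\mu-m)}e^{\mu\zeta^2}$, using $t\,e^{-ct}\le 1/(ec)$ with $c=(\mu-m)/2$; hence $\rho^2\E[\zeta^2 e^{m\zeta^2+|\rho\zeta|}]\le\frac{2\gamma\rho^2}{e(\mu-m)}e^{\rho^2/(2(\mu-m))}$.

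Finally, with $v:=\rho^2/(2(\mu-m))$ the last bound reads $\frac{4\gamma}{e}v e^v$, and it remains to verify the scalar inequality $\frac{4\gamma}{e}v e^v\le e^{(1+2\gamma)v}-1$ for $v\ge 0$; both sides vanish at $v=0$ and, differentiating, it suffices that $(1+2\gamma)e^{2\gamma v}\ge(1+2\gamma)(1+2\gamma v)\ge\frac{4\gamma}{e}(1+v)$, which holds since $\frac4e<2$. Combining the two bounds with the key inequality then gives the claim. I expect the only slightly delicate point to be this bookkeeping of constants — arranging the Young weight and the two one-variable estimates so that the prescribed constant $1+2\gamma$ comes out; the structural skeleton (symmetrize, use centering through Jensen, reduce to scalar inequalities) is routine.
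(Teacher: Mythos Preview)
Your proof is correct. Every step checks out: the pointwise bound $e^y\le 1+y+\tfrac{y^2}{2}e^{|y|}$ holds for all $y$, the symmetrization plus Jensen (on the convex map $t\mapsto e^t$, applied to $m\zeta^2-\rho\zeta$) yields the key inequality, and the two scalar estimates together with $\frac{4\gamma}{e}ve^v\le e^{(1+2\gamma)v}-1$ assemble exactly into $\exp\bigl(\frac{(1+2\gamma)(\rho^2+m)}{2(\mu-m)}\bigr)$.

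Your route, however, differs from the paper's. The paper does not symmetrize: it expands $e^{m\zeta^2+\rho\zeta}$ directly as $\le 1+\rho\zeta+(\rho^2+m)\zeta^2 e^{m\zeta^2+\rho\zeta}$, so the centering $\E[\zeta]=0$ kills the linear term outright after integration, leaving $\E[e^{m\zeta^2+\rho\zeta}]\le 1+(\rho^2+m)\,\E[\zeta^2 e^{m\zeta^2+\rho\zeta}]$. A single application of Young's inequality with an auxiliary parameter $m_1\in(m,\mu)$, followed by the choice $m_1=\frac{\mu+2\gamma m}{1+2\gamma}$ and the scalar bound $1+ye^{y/2}\le e^y$, then gives the result in one stroke. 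Compared with this, your symmetrization trades a direct cancellation for a lower bound via Jensen, which cleanly decouples the $m$-part ($2\E[e^{m\zeta^2}]-1$) from the $\rho$-part; the price is two separate scalar inequalities and a recombination step at the end. Your use of $e^{|y|}$ also sidesteps any sign issue in the pointwise expansion. Both approaches are short; the paper's is somewhat more economical because the factor $(\rho^2+m)$ appears naturally and only one expectation needs bounding, while yours is perhaps more robust in that each piece is controlled independently before being glued back to the target constant. Your parenthetical remark that ``the linear term $\rho\,\E[\zeta e^{m\zeta^2}]$ would remain uncontrolled'' is slightly misleading: the paper avoids that term entirely by expanding $e^{m\zeta^2}$ first and $e^{\rho\zeta}$ second, so the only linear contribution is $\rho\zeta$ itself.
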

  The proof of this Lemma is given in
  Section~\ref{sec:lemmas}. Conditionally to $\G_{n-1}$, we apply
  Lemma~\ref{l1} to $\ze_n$. This gives
  \begin{equation*}
    H_n \le \E[\exp(\mu_n \ze_n^2 + \rho_n\ze_n - \de_n) \mid
    \G_{n-1}] \le \exp\Bigpar{ \Ga_\la \bigpar{\rho_n^2 + \mu_n} -
      \de_n},
  \end{equation*}
  that can be be written
  $$ H_n
  \le \exp\Bigcro{ \frac{\la a s^2_{n-1}}{(a+V_{n})^2}
    \Bigpar{\Ga_{\la} + 2M_{n-1}^2 \Bigpar{\frac{2\la \Ga_{\la}
          a}{(a+V_{n})^2}-\frac{1}{a+V_{n-1}}} } }$$ which yields
  \eqref{majH} using $a / (a+V_n)^2 \leq 1/(a+V_{n-1})$. Since
  $\lambda < \mu / [2 (1 + \gamma)]$, we have $2 \lambda
  \Gamma_\lambda - 1 < 0$, so (\ref{majH}) entails
  \begin{equation*}
    H_n-1 \le \exp\Bigcro{
      \frac{\la\Ga_{\la} a s^2_{n-1}} {(a+V_{n})^2}}-1 \le
    (\exp{(\la\Ga_\la)} - 1) \frac{a s^2_{n-1}}{(a+V_n)^2},
  \end{equation*}
  where we used the fact that $e^{\mu x} - 1 \leq (e^\mu - 1) x$ for
  any $x \in [0, 1/2]$, and $\mu > 0$. Note that~\eqref{majH} entails
  also the following inclusion:
  \begin{equation*}
    \set{H_n>1} \subset \set{\frac{2 M_{n-1}^2}{a+V_{n-1}} <
      \frac{\Ga_{\la} }{1-2\la\Ga_\la}} \subset\set{ e^{\la Y_{n-1}} <
      \exp\Big(\frac{\la\Ga_\la}{2(1-2\la\Ga_\la)} \Big) }.
  \end{equation*}
  It follows that
  \begin{equation*}
    \sum_{k = 1}^n e^{\la Y_{k-1}} (H_k - 1) \leq c_{\lambda}
    \sum_{k=1}^n \frac{a s^2_{k-1}}{(a+V_k)^2},
  \end{equation*}
  so \eqref{majf} follows, since
  \begin{equation*}
    \sum_{k=1}^n \frac{a s^2_{k-1}}{(a+V_k)^2} \le \int_0^{V_n}
    \frac{a}{(a+x)^2} dx \le 1.
  \end{equation*}
  This concludes the proof of \eqref{eq:prop_psi2} for $\alpha =
  2$. $\hfill \square$

  \subsection{Proof of Theorem~\ref{thm:key} for $\alpha = 1$}

  First, note that~\eqref{eq:key_prop_ass} and the fact that the
  $\zeta_k$ are centered entails that for any $|\lambda| < \mu$, we
  have
  \begin{equation}
    \label{laplace}
    \E[\exp(\lambda \ze_k) \mid \G_{k-1}] \le \exp(\mu' \lambda^2)
  \end{equation}
  for any $k \geq 1$, where $\mu' = (\gamma-1) / \mu^2$. Now, we use
  the same mechanism of proof as for the case $\alpha = 2$. Let $a >
  0$ and $\lambda \in (-\mu,\mu)$ be fixed. Define
  \begin{equation*}
    Y_n= \frac{\sqrt a M_n}{a + V_n} \text{ and } H_n = \E
    \bigcro{\cosh(\lambda Y_n)-\cosh(\lambda Y_{n-1}) \mid \G_{n-1}}.
  \end{equation*}
  Assuming for the moment that $H_n$ is finite almost surely, we
  define the local martingale
  \begin{equation*}
    S_n := \sum_{k=1}^n \Bigpar{ \cosh(\lambda Y_k) - \cosh(\lambda
      Y_{k-1}) - H_k}.
  \end{equation*}
  Thus, inequality~\eqref{eq:prop_psi1} follows if we prove that for
  all $n\ge 1$:
  \begin{equation*}
    \cosh(\lambda Y_n) \le 1 + S_n + \mu' \lambda^2
    \exp\bigpar{\mu'\lambda^2} \cosh\bigpar{2\log 2 +
      2\mu'\lambda^2}.
  \end{equation*}
  We can write
  \begin{equation*}
    Y_n - Y_{n-1} = -\frac{\sqrt{a}M_{n-1} s^2_{n-1}}{(a+V_n)(a+V_{n-1})}
    + \frac{\sqrt{a} s_{n-1} \ze_n}{a+V_n},
  \end{equation*}
  which gives, together with~\eqref{laplace}:
  \begin{equation*}
    \E\Bigcro{\exp\bigpar{\pm\lambda(Y_n - Y_{n-1})} \mid \G_{n-1}}
    \le \exp\Bigpar{ \pm \frac{\lambda\sqrt{a} M_{n-1}
        s^2_{n-1}}{(a+V_n)(a+V_{n-1})} +\frac{\mu' \lambda^2 a 
        s^2_{n-1}}{(a+V_n)^2} }.
  \end{equation*}
  As we have
  \begin{equation*}
    \cosh(\lambda Y_n) = \frac{1}{2} e^{\lambda Y_{n-1}}
    e^{\lambda(Y_n-Y_{n-1})} + \frac{1}{2} e^{-\lambda Y_{n-1}}
    e^{-\lambda(Y_n-Y_{n-1})},
  \end{equation*}
  we derive:
  \begin{align*}
    \E\Bigcro{\cosh(\lambda Y_n)\mid \G_{n-1}} &\le \frac{1}{2}
    \exp\Bigpar{ \lambda Y_{n-1} -\frac{\lambda\sqrt{a}M_{n-1}
        s^2_{n-1}}{(a+V_n)(a+V_{n-1})} +\frac{\mu' \lambda^2 a
        s^2_{n-1}}{(a+V_n)^2}
    }\\
    &+ \frac{1}{2} \exp\Bigpar{-\lambda Y_{n-1}+
      \frac{\lambda\sqrt{a}M_{n-1} s^2_{n-1}}{(a+V_n)(a+V_{n-1})}
      +\frac{\mu' \lambda^2 a s^2_{n-1}}{(a+V_n)^2}
    },\\
    &= \exp\Bigpar{\frac{\mu' \lambda^2 a s^2_{n-1}}{(a+V_n)^2}}
    \cosh\Bigpar{(1-\frac{s^2_{n-1}}{a+V_n}) \lambda Y_{n-1}}.
  \end{align*}
  So, it remains to prove that
  \begin{align*}
    \sum_{k=1}^n \Big( \exp\Bigpar{\frac{\mu' \lambda^2 a
        s^2_{k-1}}{(a+V_k)^2}} &\cosh \Bigpar{(1 - \frac{s^2_{k-1}}{a
        + V_k}) \lambda Y_{k-1}} - \cosh(\lambda Y_{k-1}) \Big) \\
    &\leq \mu' \lambda^2 \exp\bigpar{\mu'\lambda^2} \cosh\bigpar{2\log 2
      + 2\mu'\lambda^2}.
  \end{align*}
  We need the following lemma.
  \begin{lemma}
    \label{calcul}
    If $A > 0$, one has
    \begin{equation*}
      \sup_{\eta \in [0,1]} \sup_{z \geq 0} \bigpar{
        e^{A\eta}\cosh((1-\eta)z)-\cosh(z)}
      \le A\eta e^{A\eta} \cosh(2\log2+2A).
    \end{equation*}    
  \end{lemma}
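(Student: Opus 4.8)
The plan is to fix $A>0$ and reduce the double supremum to a one-variable problem. First I would expand $\cosh((1-\eta)z) = \cosh z \cosh(\eta z) - \sinh z \sinh(\eta z) \le \cosh z \cosh(\eta z)$, using $z\ge 0$ and $\eta\in[0,1]$ so that $\sinh z\sinh(\eta z)\ge 0$. Hence
\begin{equation*}
  e^{A\eta}\cosh((1-\eta)z) - \cosh z \le \cosh z\,\bigpar{e^{A\eta}\cosh(\eta z) - 1}.
\end{equation*}
This does not immediately close things, because $\cosh z$ is unbounded, so the bound has to be organised so that a decaying factor in $z$ kills it; the natural move is to instead write the left-hand side as $\cosh z\,(e^{A\eta}\cosh(\eta z)/\cosh z \cdot \text{something})$ — but cleaner is to keep the exact identity $\cosh((1-\eta)z)=\cosh z\cosh(\eta z)-\sinh z\sinh(\eta z)$ and exploit the cancellation between $\cosh z$ and $\sinh z$ for large $z$.

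Concretely, the second step is to treat two regimes in $z$. For $z$ in a bounded range, say $z\le 2\log 2 + 2A$, one bounds $e^{A\eta}\cosh((1-\eta)z)-\cosh z\le e^{A\eta}\cosh((1-\eta)z)\le e^{A\eta}\cosh(z)\le e^{A\eta}\cosh(2\log 2+2A)$, and then uses $e^{A\eta}\le 1 + A\eta e^{A\eta}$ (which holds since $e^{t}-1\le t e^{t}$ for $t\ge 0$) together with the fact that the ``$1\cdot\cosh z$'' part is cancelled by the $-\cosh z$ term when $\eta=0$; more carefully, since $\cosh((1-\eta)z)\le\cosh z$, we get $e^{A\eta}\cosh((1-\eta)z)-\cosh z\le (e^{A\eta}-1)\cosh z\le A\eta e^{A\eta}\cosh(2\log2+2A)$ on this range, which is exactly the claimed bound. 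For $z > 2\log 2 + 2A$, the point is that the factor $e^{A\eta}$ is more than compensated: write $e^{A\eta}\cosh((1-\eta)z)\le e^{A\eta}\cdot\tfrac12 e^{(1-\eta)z} = \tfrac12 e^{z}e^{-\eta(z-A)}\le \tfrac12 e^{z}$ as soon as $z\ge A$ (true here since $z>2A$), so $e^{A\eta}\cosh((1-\eta)z)-\cosh z\le \tfrac12 e^z - \cosh z = -\tfrac12 e^{-z} \le 0$, and the bound holds trivially because the right-hand side $A\eta e^{A\eta}\cosh(2\log2+2A)$ is nonnegative.

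The main obstacle I anticipate is choosing the threshold between the two regimes so that both estimates go through simultaneously with the \emph{same} constant $\cosh(2\log 2 + 2A)$: the large-$z$ argument only needs $z\ge A$, but the small-$z$ argument forces the threshold to be at least $2\log 2 + 2A$ in order to produce precisely $\cosh(2\log2+2A)$ on the right, and one has to check $2\log2+2A\ge A$, which is immediate. A minor point to be careful about is the case $\eta=1$ (then the left side is $e^{A}-\cosh z\le e^A - 1\le A e^A\le A e^A\cosh(2\log2+2A)$, fine) and the case $A$ small, where $e^{A\eta}-1$ must be controlled by $A\eta e^{A\eta}$ rather than just $A\eta$; the elementary inequality $e^t-1\le t e^t$ handles this uniformly. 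Assembling the two regimes gives the stated supremum bound.
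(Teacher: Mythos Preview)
Your argument on the bounded range $z \le 2\log 2 + 2A$ is clean and correct, and it is exactly the final step the paper uses: once one knows $z\le 2\log 2+2A$, the chain $e^{A\eta}\cosh((1-\eta)z)-\cosh z \le (e^{A\eta}-1)\cosh z \le A\eta e^{A\eta}\cosh(2\log 2+2A)$ finishes the job.

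The large-$z$ step, however, is broken. You write
\[
  e^{A\eta}\cosh((1-\eta)z)\le e^{A\eta}\cdot\tfrac12 e^{(1-\eta)z},
\]
but this uses $\cosh x \le \tfrac12 e^{x}$, which is the wrong direction: for every $x\ge 0$ one has $\cosh x = \tfrac12(e^{x}+e^{-x}) \ge \tfrac12 e^{x}$. If you replace it by the valid bound $\cosh x \le e^{x}$, you only obtain
\[
  e^{A\eta}\cosh((1-\eta)z)-\cosh z \le e^{z}e^{-\eta(z-A)}-\cosh z \le e^{z}-\cosh z = \sinh z,
\]
which is nonnegative and says nothing. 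The difficulty is genuine: for small $\eta$ the factor $e^{A\eta}$ is nearly $1$ and $\cosh((1-\eta)z)$ is nearly $\cosh z$, so crude exponential bounds lose exactly the cancellation you need.

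The paper resolves this by proving the contrapositive --- if $e^{A\eta}\cosh((1-\eta)z)\ge \cosh z$ then $z\le 2\log 2+2A$ --- and splitting on $\eta$ rather than on $z$. For $\eta\ge 1/2$, the coarse bound $\cosh z/\cosh((1-\eta)z)\ge e^{\eta z}/2$ gives $z\le A+(\log 2)/\eta\le A+2\log 2$. For $\eta<1/2$, one checks that $x\mapsto e^{zx/2}\cosh((1-x)z)$ is nonincreasing on $[0,1]$ whenever $z\ge \log 3$, which yields $\cosh z\ge e^{\eta z/2}\cosh((1-\eta)z)$ and hence $e^{A\eta}\ge e^{\eta z/2}$, i.e.\ $z\le 2A$; otherwise $z<\log 3<2\log 2$. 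Either way $z\le 2\log 2+2A$, and then your bounded-range argument applies. You need to replace your large-$z$ paragraph by an argument of this kind.
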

  The proof of this Lemma is given in Section~\ref{sec:lemmas}. Using
  Lemma~\ref{calcul} with $\eta=s^2_{k-1}/(a+V_k)$ and $A = \mu'
  \lambda^2 a /(a+V_k)$, we obtain
  \begin{align*}
    \exp\Bigpar{\frac{\mu' \lambda^2 a s^2_{k-1}}{(a + V_k)^2}}
    \cosh\Bigpar{ \big(1 - \frac{s^2_{k-1}}{a+V_k} \big) \lambda
      Y_{k-1}} - \cosh(\lambda Y_{k-1}) \\
    \leq \frac{\mu' \lambda^2 a s^2_{k-1}}{(a + V_k)^2} e^{\mu'
      \lambda^2 } \cosh\bigpar{2\log2 + 2 \lambda^2 \mu'},
  \end{align*}
  and (\ref{eq:prop_psi1}) follows, since
  \begin{equation*}
    \sum_{k=1}^n \frac{a s^2_{k-1}}{(a + V_k)^2} \le \int_0^{V_n}
    \frac{a}{(a + x)^2} dx \le 1. \qedhere
  \end{equation*}
  This concludes the proof of Theorem~\ref{thm:key}.  $\hfill \square$

  \subsection{Proof of Theorem~\ref{thm:upper_bound}}

\subsubsection{Notations}

Let us fix $\lambda \in (0, \frac{\mu}{2(1 + \gamma)})$, to be chosen
later. In the following we denote by $C$ any constant which depends
only on $(\la,\mu,\ga)$. Let us recall that on the event
\begin{equation*}
  \Om' := \{ L(h_0)^{-1/2} \le \bar W(h_0) \} \cap \{ W(H^*) \leq
  u_0 \},
\end{equation*}
the bandwidths $H^*$ and $\hat H$ are well defined, and let us we set
for short
\begin{equation*}
  \P'(A) = \P(\Om'\cap A).
\end{equation*}
We use the following notations: for $h>0$ and $a>0$, take
\begin{equation}
  \label{eq:MZdef}
  M(h) := \sum_{k=1}^N \frac{1}{\si^2_{k-1}} \ind{\abs{X_{k-1}-x}\le h}
  \ep_k, \quad Z(h,a) := \frac{\sqrt{a}\, \abs{M(h)}}{a+L(h)}.
\end{equation}
If $h = h_j \in \mathcal H$, we denote $h_{-} := h_{j+1}$ and $h_+ :=
h_{j-1}$ if $j\ge 1$. We will use repeatedly the following quantity:
for $i_0\in\N$ and $t>0$, consider
\begin{equation}
  \label{eq:pidef}
  \pi(i_0,t) := \P \Big[ \sup_{i\ge i_0} \psi^{-1/2}(h_i)
  \sup_{a \in I(h_i)} Z\bigpar{h_i, a \psi(h_i)} >t \Big],
\end{equation}
where
\begin{equation*}
  I(h) := [u_0^{-2}, \delta_0^{-2} (h / h_0)^{-2\al_0}].
\end{equation*}
Note that this interval is related to the definition of $\bar W$,
see~\eqref{eq:defWbar}. The proof of Theorem~\ref{thm:upper_bound}
contains three main steps. Namely,
\begin{enumerate}
\item the study of the risk of the ideal estimator $\bar W(H^*)^{-1} |
  \hat f(H^*) - f(x)|$,
\item the study of the risk $\bar W(H^*)^{-1} | \hat f(\hat H) -
  f(x)|$ when $\{ H^* \leq \hat H \}$,
\item the study of the risk $\bar W(H^*)^{-1} | \hat f(\hat H) -
  f(x)|$ when $\{ H^* > \hat H \}$.
\end{enumerate}
These are the usual steps in the study of the Lepski's method, see
\cite{lepski90, lepski_92, lepski_mammen_spok_97,
  lepski_spok97}. However, the context (and consequently the proof)
proposed here differs significantly from the ``usual'' proof.

\subsubsection{On the event $\{ H^* \leq \hat H \}$}

Recall that $\nu > 0$ is the constant in front of the Lepski's
threshold, see~\eqref{eq:hatH}. Let us prove the following.
\begin{lemma}
  \label{lem:Hstar-smaller-hatH}
  For all $t > 0$ one has
  \begin{equation}
    \label{f(H^*)}
    \P' \Big[ \bar W(H^*)^{-1} \abs{ \hat f(H^*)-f(x)
    } > t \Big] \le \pi(0, (t-1)/2),
  \end{equation}
  and
  \begin{equation}
    \label{eq:HstarhatH}
    \P' \Big[ H^* \le \hat H, \bar W(H^*)^{-1}
  \bigabs{\hat f(\hat H) - f(x)} >t \Big]
  \le \pi(0, (t-\nu-1)/2).
\end{equation}
\end{lemma}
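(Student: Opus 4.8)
The plan is to bound everything by the stochastic error $Z(h,a)$ defined in~\eqref{eq:MZdef}, and then recognize the resulting supremum as $\pi(0,\cdot)$ from~\eqref{eq:pidef}. First, for~\eqref{f(H^*)}: write $\hat f(h) - f(x) = (\hat f(h) - \tilde f(h)) + (\tilde f(h) - f(x))$. The second (bias) term is controlled on $\Omega'$ by the definition of $W$ in~\eqref{eq:bias-assumption} and hence by $\bar W(h)$ (using that $H^* \ge H_{u_0}$, so that $H^*$ lies in the range over which the $\sup$ in~\eqref{eq:bias-assumption} is taken, and that $W(H^*) \le \bar W(H^*)$). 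This contributes the ``$1$'' in $(t-1)/2$. The first (stochastic) term equals $M(h)/L(h)$ at $h = H^*$. The key point is that, by the very definition of $H^*$ in~\eqref{eq:Hstar}, one has $(\psi(H^*)/L(H^*))^{1/2} \le \bar W(H^*)$, so $\bar W(H^*)^{-1} |M(H^*)|/L(H^*) \le (L(H^*)/\psi(H^*))^{1/2} |M(H^*)|/L(H^*) = \psi(H^*)^{-1/2} |M(H^*)|/L(H^*)^{1/2}$. I then rewrite $|M(h)|/L(h)^{1/2}$ in the self-normalized form $Z(h, a\psi(h))$ for the choice $a = L(h)/\psi(h)$: indeed $Z(h,a\psi(h)) = \sqrt{a\psi(h)}\,|M(h)|/(a\psi(h) + L(h))$, and with $a\psi(h) = L(h)$ this is exactly $|M(h)|/(2 L(h)^{1/2})$ — which accounts for the factor $2$ in $(t-1)/2$. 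Finally I must check $a = L(H^*)/\psi(H^*) \in I(H^*)$: the lower bound $a \ge u_0^{-2}$ follows from $\bar W(H^*) \le u_0$ together with $(\psi(H^*)/L(H^*))^{1/2}\le\bar W(H^*)$; the upper bound $a \le \delta_0^{-2}(H^*/h_0)^{-2\alpha_0}$ follows from $\bar W(H^*) \ge \delta_0(H^*/h_0)^{\alpha_0}$, again via the defining inequality of $H^*$. Hence the event in~\eqref{f(H^*)} is contained in the event defining $\pi(0,(t-1)/2)$ (taking $i_0 = 0$ and using that $H^* = h_i$ for some $i$), which gives the first bound.

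For~\eqref{eq:HstarhatH}, on the event $\{H^* \le \hat H\}$ the bandwidth $H^*$ is itself one of the $h'$ appearing in the Lepski constraint~\eqref{eq:hatH} with $h = \hat H$ (since $H^* \in [H_{u_0}, \hat H] \cap \mathcal H$ because $H^* \ge H_{u_0}$, which holds on $\Omega'$ as $\bar W(H^*) \le u_0$). Therefore the Lepski selection rule directly yields $|\hat f(\hat H) - \hat f(H^*)| \le \nu(\psi(H^*)/L(H^*))^{1/2} \le \nu\,\bar W(H^*)$ by the definition of $H^*$. Combining with the triangle inequality $|\hat f(\hat H) - f(x)| \le |\hat f(\hat H) - \hat f(H^*)| + |\hat f(H^*) - f(x)|$ and dividing by $\bar W(H^*)$, I get $\bar W(H^*)^{-1}|\hat f(\hat H) - f(x)| \le \nu + \bar W(H^*)^{-1}|\hat f(H^*) - f(x)|$, so that the event in~\eqref{eq:HstarhatH} is contained in the event $\{\bar W(H^*)^{-1}|\hat f(H^*) - f(x)| > t - \nu\}$, and~\eqref{f(H^*)} applied with $t-\nu$ in place of $t$ finishes the proof.

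The main obstacle is the bookkeeping in the first part: one must carefully verify that the ``oracle'' bandwidth $H^*$ indeed belongs to the index set over which $\pi(0,\cdot)$ takes its supremum — specifically that the self-normalization parameter $a = L(H^*)/\psi(H^*)$ falls in the interval $I(H^*)$ — and this is exactly where the otherwise mysterious truncations $\vee(\delta_0(h/h_0)^{\alpha_0})$ and $\wedge u_0$ in the definition~\eqref{eq:defWbar} of $\bar W$ (and the event $\Omega'$) are used. Everything else is the triangle inequality, the bias bound on $\Omega'$, and the defining minimality of $H^*$; no martingale input from Theorem~\ref{thm:key} is needed yet (that enters only when $\pi(0,\cdot)$ is itself estimated).
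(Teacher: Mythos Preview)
Your overall strategy is right and matches the paper's: bias/variance split, control of the bias by $\bar W(H^*)$ on $\Omega'$, rewriting the stochastic term as a $Z(h,a\psi(h))$, and then the Lepski comparison for~\eqref{eq:HstarhatH}. The second part (the $\nu$-shift via the selection rule) is exactly as in the paper.

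There is, however, a genuine gap in your treatment of the stochastic term. You choose $a = L(H^*)/\psi(H^*)$ and claim that $a \le \delta_0^{-2}(H^*/h_0)^{-2\alpha_0}$ ``follows from $\bar W(H^*) \ge \delta_0(H^*/h_0)^{\alpha_0}$, again via the defining inequality of $H^*$''. But the defining inequality $(\psi(H^*)/L(H^*))^{1/2} \le \bar W(H^*)$ only yields the \emph{lower} bound $a \ge \bar W(H^*)^{-2}$; combined with $\bar W(H^*)\le u_0$ this gives $a \ge u_0^{-2}$, which is fine, but it says nothing about an upper bound. In fact nothing in the setup prevents $L(H^*)$ from being much larger than $\psi(H^*)\,\delta_0^{-2}(H^*/h_0)^{-2\alpha_0}$: take for instance $W\equiv 0$ and all $X_k$ equal to $x$, so $L(h)\equiv N$; then one checks that at $H^*$ one has $L(H^*)/\psi(H^*) \ge \delta_0^{-2}(H^*/h_0)^{-2\alpha_0}$, with strict inequality generically. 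In that case your $a$ lies outside $I(H^*)$, and since $a \mapsto Z(H^*,a\psi(H^*))$ is maximized precisely at your choice $a = L(H^*)/\psi(H^*)$, the inequality $Z(H^*,L(H^*)) \le \sup_{a\in I(H^*)} Z(H^*,a\psi(H^*))$ fails.

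The fix is to pick instead $a = \bar W(H^*)^{-2}$, which lies in $I(H^*)$ \emph{by the very definition}~\eqref{eq:defWbar} of $\bar W$ (this is where both truncations are used, as you correctly anticipated). With this choice, the defining inequality of $H^*$ gives $a\psi(H^*) \le L(H^*)$, hence
\[
\frac{|M(H^*)|}{L(H^*)} \le \frac{2\,|M(H^*)|}{a\psi(H^*)+L(H^*)}
= 2\,\bar W(H^*)\,\psi(H^*)^{-1/2}\, Z\bigl(H^*,\bar W(H^*)^{-2}\psi(H^*)\bigr),
\]
and after dividing by $\bar W(H^*)$ and taking the supremum over $a\in I(H^*)$ and over $j\ge 0$ you land exactly on $2\sup_{j\ge 0}\psi(h_j)^{-1/2}\sup_{a\in I(h_j)} Z(h_j,a\psi(h_j))$, which bounds the event by $\pi(0,(t-1)/2)$ as desired. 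This is precisely the route the paper takes.
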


\begin{proof}
  First, use the decomposition
  \begin{equation*}
    \abs{ \hat f(H^*)-f(x) } \le \abs{ \tilde f(H^*) - f(x) } + \frac{
      \abs{M(H^*)}}{L(H^*)},
  \end{equation*}
  where we recall that $\tilde f(h)$ is given by~\eqref{eq:ftilde},
  and the fact that $\abs{ \tilde f(H^*)-f(x) } \le \bar W(H^*)$,
  since $W(H^*) \leq \bar W(H^*)$ on $\set{W(H^*) \le u_0}$. Then,
  use~\eqref{eq:Hstar} to obtain $L(H^*)^{1/2} \geq \psi(H^*)^{1/2}
  \bar W(H^*)^{-1}$, so that
  \begin{align*}
    \frac{\abs{M(H^*)}}{L(H^*)} &\le \frac{2 \abs{M(H^*)}}{L(H^*) +
      \psi(H^*) \bar W(H^*)^{-2}} \\
    &\le 2 \bar W(H^*) \psi^{-1/2}(H^*) Z\bigpar{H^*, \bar W^{-2}(H^*)
      \psi(H^*)},
  \end{align*}
  and
  \begin{align}
    \bar W^{-1}(H^*) \frac{\abs{M(H^*)}}{L(H^*)} &\le 2
    \psi^{-1/2}(H^*) \sup_{a \in I(H^*)} Z\bigpar{H^*, a \psi(H^*)}
    \nonumber \\
    &\label{Mh} \le 2 \sup_{j \ge 0} \psi^{-1/2}(h_j) \sup_{a \in
      I(h_j)} Z\bigpar{h_j, a \psi(h_j)},
  \end{align}
  this concludes the proof of \eqref{f(H^*)}. On $\{H^*\le \hat H\}$,
  one has using~\eqref{eq:hatH} and~\eqref{eq:Hstar}:
  \begin{equation*}
    | \hat f(\hat H)-\hat f(H^*) | \le \nu ( \psi(H^*) / L(H^*) )^{1/2}
    \le \nu \bar W(H^*).
  \end{equation*}
  Hence, since $W(H^*) \le \bar W(H^*)$ on $\set{W(H^*)\le u_0}$, we
  have for all $t>0$:
  \begin{align*}
    & \P' \Big[ H^* \le \hat H, \bar W(H^*)^{-1}\bigabs{\hat f(\hat
      H)-f(x)}>t \Big] \\
    & \le \P' \Big[ H^* \le \hat H, \bar W(H^*)^{-1}\bigabs{\hat f(
      H^*)-f(x)}>t-\nu \Big],
  \end{align*}
  and~\eqref{eq:HstarhatH} follows using~\eqref{f(H^*)}.
\end{proof}

\subsubsection{On the event $\{ H^* > \hat H \}$}

\begin{lemma}
  \label{majsup}
  For any $t, \eta>0$, we have
  \begin{equation*}
    \P' \Bigpar{ H^*\le \eta, \sup_{ \Hb_{u_0} \le h < H^*, h\in\HH}
      \frac{ \abs{M(h)} }{(L(h) \psi(h))^{1/2}} >t} \le
    \pi(i_0(\eta), t/2),
  \end{equation*}
  where we put
  \begin{equation*}
    i_0(\eta)=\min\set{i\in\N : h_i < \eta}.
  \end{equation*}
\end{lemma}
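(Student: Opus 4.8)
The plan is to bound the left-hand supremum term by term over $h\in\HH$ with $\Hb_{u_0}\le h<H^*$ by $2\,\psi^{-1/2}(h)\sup_{a\in I(h)}Z(h,a\psi(h))$, and then, on $\{H^*\le\eta\}$, to enlarge the range of the supremum from $\{\Hb_{u_0}\le h<H^*\}$ to the whole tail $\{h_i:i\ge i_0(\eta)\}$; the bound $\pi(i_0(\eta),t/2)$ then falls straight out of the definition~\eqref{eq:pidef} of $\pi$.

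First, the \emph{range reduction}: on $\{H^*\le\eta\}$ any $h\in\HH$ with $h<H^*$ satisfies $h<\eta$, hence $h=h_i$ for some $i\ge i_0(\eta)$ because the grid $(h_i)$ is geometric and decreasing. Consequently the supremum over $\{\Hb_{u_0}\le h<H^*,\ h\in\HH\}$ is no larger than the supremum over $i\ge i_0(\eta)$ (if that index set is empty the left-hand probability is $0$ and there is nothing to prove).

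Second, the \emph{key identity}: for a fixed $h=h_i$ with $L(h)>0$, choose $a:=L(h)/\psi(h)$, so that $a\psi(h)=L(h)$ and therefore $Z(h,a\psi(h))=\sqrt{L(h)}\,|M(h)|/(2L(h))=|M(h)|/(2\sqrt{L(h)})$, that is,
\[
  \frac{|M(h)|}{(L(h)\psi(h))^{1/2}}=2\,\psi^{-1/2}(h)\,Z(h,a\psi(h)).
\]
This is exactly the equality case of $a\psi(h)+L(h)\ge 2\sqrt{a\psi(h)L(h)}$, attained precisely at $a\psi(h)=L(h)$; it is the algebraic identity that makes the self-normalized functional $Z$ appear. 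It then remains to verify that this $a$ lies in $I(h)=[u_0^{-2},\delta_0^{-2}(h/h_0)^{-2\al_0}]$: since $h\mapsto L(h)$ is non-decreasing and $\psi$ non-increasing, $h\ge\Hb_{u_0}$ together with~\eqref{eq:Hu} gives $\psi(h)/L(h)\le\psi(\Hb_{u_0})/L(\Hb_{u_0})\le u_0^2$, hence $a\ge u_0^{-2}$; and $h<H^*$ with the minimality of $H^*$ in~\eqref{eq:Hstar} forces $(\psi(h)/L(h))^{1/2}>\bar W(h)$, hence $a<\bar W(h)^{-2}\le\delta_0^{-2}(h/h_0)^{-2\al_0}$ by~\eqref{eq:defWbar}. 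Thus $a\in I(h)$ and $Z(h,a\psi(h))\le\sup_{a'\in I(h)}Z(h,a'\psi(h))$.

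Combining the two steps, on $\Omega'\cap\{H^*\le\eta\}$,
\[
  \sup_{\Hb_{u_0}\le h<H^*,\ h\in\HH}\frac{|M(h)|}{(L(h)\psi(h))^{1/2}}
  \le 2\sup_{i\ge i_0(\eta)}\psi^{-1/2}(h_i)\sup_{a\in I(h_i)}Z(h_i,a\psi(h_i)),
\]
so the event in the statement is contained in $\{\sup_{i\ge i_0(\eta)}\psi^{-1/2}(h_i)\sup_{a\in I(h_i)}Z(h_i,a\psi(h_i))>t/2\}$; taking probabilities and dropping the intersection with $\Omega'$ yields exactly $\pi(i_0(\eta),t/2)$. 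The computation is essentially forced, so there is no deep difficulty here; the one point requiring care — and the reason the shapes of $\bar W$ in~\eqref{eq:defWbar} and of $I(h)$ were chosen this way — is the membership $a=L(h)/\psi(h)\in I(h)$, i.e.\ the two-sided control of $\psi/L$, from below by $u_0^{-2}$ (via $H_{u_0}$) and from above by $\bar W(h)^{-2}$ (via $H^*$).
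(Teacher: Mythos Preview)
Your proof is correct and follows essentially the same approach as the paper. The paper reparametrizes by $u(h):=(\psi(h)/L(h))^{1/2}$, writes $h=H_{u(h)}$, and then uses $L(H_u)\ge u^{-2}\psi(H_u)$ to bound $u^{-1}|M(H_u)|/L(H_u)\le 2\psi(H_u)^{-1/2}Z(H_u,u^{-2}\psi(H_u))$; but since $u(h)^{-2}=L(h)/\psi(h)$ this is exactly your choice of $a$ (and the inequality becomes your equality), and the paper's range $\delta_0(H_u/h_0)^{\alpha_0}<u\le u_0$ is equivalent to your verification that $a\in I(h)$.
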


\begin{proof}
  Note that $u(h) := (\psi(h) / L(h))^{1/2}$ is decreasing, so $h =
  \Hb_{u(h)}$ for $h \in \HH$, and note that
  \begin{equation*}
    \frac{ \abs{M(h)} }{(L(h) \psi(h))^{1/2}} = u(h)^{-1} \frac{
      \abs{M(\Hb_{u(h)})} }{L(\Hb_{u(h)})}.
  \end{equation*}
  If $h < H^*$ then $u(h) = (\psi(h) / L(h))^{1/2} \geq \bar W(h)$
  using~\eqref{eq:Hstar}, and $\bar W(h) \geq \epsilon_0 (h /
  h_0)^{\alpha_0}$. So, $u(h) \geq \epsilon_0 (H_{u(h)} /
  h_0)^{\alpha_0}$ when $h < H^*$. If $h \geq H_{u_0}$, then $u(h)
  \leq u_0$ using the definition of $H_{u_0}$. This entails
  \begin{align*}
    \sup_{ \Hb_{u_0} \le h < H^*, h \in \HH} &\frac{ \abs{M(h)} }{
      (L(h) \psi(h))^{1/2}} \\
    &\le \sup \Big\{ u^{-1}\frac{ \abs{M(\Hb_u)} }{L(\Hb_u)}; u :
    \Hb_u < H^* \text{ and }\delta_0(H_u / h_0)^{\al_0}<u\le u_0 \Big\}.
  \end{align*}
  Hence, for any $u$ such that $\delta_0(H_u / h_0)^{\al_0} < u \le u_0$
  and $\Hb_u < H^* \le \eta$, one has using~\eqref{eq:Hu}:
  \begin{align*}
    u^{-1} \frac{ \abs{M(\Hb_u)}}{L(\Hb_u)} &\le 2 u^{-1} \frac{
      \abs{M(\Hb_u)}}{L(\Hb_u) + u^{-2} \psi(\Hb_u)} \\
    &=2 \psi(\Hb_u)^{-1/2} Z\bigpar{\Hb_u,u^{-2}\psi(\Hb_u)} \\
    &\le 2 \sup_{i : h_i < \eta} \psi(h_i)^{-1/2} \sup_{
      \delta_0(h_i/h_0)^{\al_0} \le u \le u_0}
    Z\bigpar{h_i,u^{-2}\psi(h_j)}. \qedhere
  \end{align*}
\end{proof}

\begin{lemma} 
  \label{lem:HstarhatH}
  For any $s, t > 0$ define
  \begin{equation}
      \label{eq:etast}
    \eta_{s, t} := h_0 \Big( \frac{u_0 s}{\delta_0 t} \Big)^{1 /
      \alpha_0}.
  \end{equation}
  Then, for all $0 < s < t$, we have:
  \begin{align*}
    \P' &\Big[ H^* > \hat H, \bar W(H^*)^{-1} \bigabs{\hat f(\hat
      H)-f(x)}>t \Big] \\
    &\leq \pi\Big(0,\frac{s - 1}{2} \Big) + \pi \Big( i_0(\eta_{s,
      t}), \frac 14 \Big( \nu - \frac{2s}{t} \Big) \Big) + \pi\Big(0,
    \frac 12 \Big( \frac{\nu t}{2s} - 1 \Big) \Big).
  \end{align*}
\end{lemma}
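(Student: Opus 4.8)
The plan is to decompose the event $\{H^* > \hat H\}$ according to where $\hat H$ sits relative to the deterministic-looking threshold $\eta_{s,t}$, and then on each piece trade the deviation of $\hat f(\hat H) - f(x)$ against the Lepski threshold. First I would bound $\bar W(H^*)^{-1}|\hat f(\hat H) - f(x)|$ on $\{H^* > \hat H\}$ by triangle inequality through $\hat f(H^*)$: writing $|\hat f(\hat H) - f(x)| \le |\hat f(\hat H) - \hat f(H^*)| + |\hat f(H^*) - f(x)|$, the second term is handled by~\eqref{f(H^*)} from Lemma~\ref{lem:Hstar-smaller-hatH}, contributing the term $\pi(0,(s-1)/2)$ once we decide that on the complementary event $\bar W(H^*)^{-1}|\hat f(H^*) - f(x)| \le s$. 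So on the good event it remains to control $\bar W(H^*)^{-1}|\hat f(\hat H) - \hat f(H^*)|$ under $\{H^* > \hat H\}$.

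Next, on $\{H^* > \hat H\}$ the maximality defining $\hat H$ in~\eqref{eq:hatH} is violated at the next bandwidth $\hat H_+ \le H^*$: there is some $h' \in [\Hb_{u_0}, \hat H_+] \cap \HH$ with $|\hat f(\hat H_+) - \hat f(h')| > \nu(\psi(h')/L(h'))^{1/2}$. I would expand $|\hat f(\hat H_+) - \hat f(h')|$ into bias and noise parts: the bias parts $|\tilde f(\hat H_+) - f(x)|$ and $|\tilde f(h') - f(x)|$ are each $\le \bar W(H^*)$ because $h', \hat H_+ < H^*$ and by~\eqref{eq:Hstar} together with~\eqref{eq:bias-assumption}; the noise parts are $|M(h)|/L(h)$ for $h \in \{h', \hat H_+\}$, both with $h < H^*$, so Lemma~\ref{majsup} applies with $\eta = H^* \le \eta_{s,t}$ on the event where $\bar W(H^*)^{-1}|\hat f(H^*)-f(x)| \le s$ forces $H^* \le \eta_{s,t}$ (this is exactly the role of the definition~\eqref{eq:etast}, since $(\psi(H^*)/L(H^*))^{1/2} \le \bar W(H^*) \le s$ and $\bar W(h) \ge \delta_0 (h/h_0)^{\alpha_0}$ give $\delta_0(H^*/h_0)^{\alpha_0} \le s \le s u_0/\bar W(H^*)\cdot\frac{1}{u_0}$, i.e.\ $H^* \le \eta_{s,t}$ up to the $t$ appearing through a lower bound $\bar W(H^*) \ge \psi(H^*)^{1/2}/L(H^*)^{1/2}$ combined with $\bar W(H^*)^{-1} \ge 1/s$ and, on the bad final event, a lower estimate $\bar W(H^*) \ge $ something of order $t^{-1}$). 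Putting these together, the violated inequality at $(h', \hat H_+)$ forces $\sup_{\Hb_{u_0}\le h<H^*,\,h\in\HH} |M(h)|/(L(h)\psi(h))^{1/2}$ to exceed roughly $\tfrac14(\nu - 2s/t)$, which is bounded by $\pi(i_0(\eta_{s,t}), \tfrac14(\nu - 2s/t))$ via Lemma~\ref{majsup}.

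Finally, the term $\pi(0, \tfrac12(\nu t/(2s) - 1))$ should come from the remaining regime where $\hat H$ itself is large — specifically where $H^* > \hat H$ but $\hat H \ge \eta_{s,t}$, so that $\bar W(H^*)$ is comparatively small relative to the threshold at scale $\hat H$; there one controls $|\hat f(\hat H) - \hat f(H^*)|$ directly by first reaching $\hat H$ from $\Hb_{u_0}$ along the grid using the Lepski admissibility inequalities, then bounding $|\hat f(H^*) - \hat f(\hat H)|$ by a telescoping sum whose noise part is again a $Z(h_i, a\psi(h_i))$-type quantity, now evaluated at the comparatively large index $i=0$, with the factor $\nu t/(2s)$ arising because $\bar W(H^*)^{-1} \ge \nu t/(2s) \cdot (\psi(\hat H)/L(\hat H))^{-1/2}\cdot(\text{const})$ on the relevant event. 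I expect the main obstacle to be the bookkeeping in step two: carefully choosing the constants $s$ and the split points so that the three error terms come out exactly as stated, and in particular verifying that $\{H^* > \hat H\}$ together with $\bar W(H^*)^{-1}|\hat f(H^*)-f(x)| \le s$ and the failure of the final deviation event really do imply $H^* \le \eta_{s,t}$ — this requires combining the oracle definition~\eqref{eq:Hstar}, the lower bound $\bar W(h)\ge \delta_0(h/h_0)^{\alpha_0}$, and a lower bound on $\bar W(H^*)$ coming from the hypothesized lower bound $t$ on $\bar W(H^*)^{-1}|\hat f(\hat H)-f(x)|$, all at once.
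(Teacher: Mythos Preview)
Your decomposition is not the one that works here, and the gap you yourself flag as the ``main obstacle'' is real: from your events you cannot obtain $H^*\le \eta_{s,t}$. Splitting through $\hat f(H^*)$ gives, on the complementary event, only $|\hat f(H^*)-f(x)|\le s\,\bar W(H^*)$; this says nothing about the size of $\bar W(H^*)$ itself, so the chain ``$\bar W(H^*)\le s$ hence $\delta_0(H^*/h_0)^{\alpha_0}\le s$'' that you sketch is unjustified, and no amount of combining~\eqref{eq:Hstar} with the lower bound on $\bar W$ will produce the factor $s/t$ you need. Without $H^*\le \eta_{s,t}$ you cannot invoke Lemma~\ref{majsup} at level $i_0(\eta_{s,t})$, and the middle term of the claimed bound is unreachable.

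The paper's decomposition is different and is precisely designed to deliver that bound. One writes
\[
\P'\!\big[H^*>\hat H,\ \bar W(H^*)^{-1}|\hat f(\hat H)-f(x)|>t\big]
\le \P'\!\big[(L(\hat H)/\psi(\hat H))^{1/2}|\hat f(\hat H)-f(x)|>s\big]
+\P'\!\big[(\psi(\hat H)/L(\hat H))^{1/2}>(t/s)\bar W(H^*)\big].
\]
On the second event, since $\hat H\ge H_{u_0}$ forces $(\psi(\hat H)/L(\hat H))^{1/2}\le u_0$, one gets immediately $\bar W(H^*)<(s/t)u_0$, hence $\delta_0(H^*/h_0)^{\alpha_0}<(s/t)u_0$, i.e.\ $H^*<\eta_{s,t}$. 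The first probability gives $\pi(0,(s-1)/2)$ directly (bias $\le 1$ because $\hat H<H^*$, then Lemma~\ref{majsup}), not via~\eqref{f(H^*)}. On the second event one uses the Lepski violation at $\hat H_+$: expanding $|\hat f(h')-\hat f(\hat H_+)|$ and absorbing the bias $2\bar W(H^*)\le (2s/t)(\psi(h')/L(h'))^{1/2}$ yields
\[
\frac{|M(h')|}{L(h')}+\frac{|M(\hat H_+)|}{L(\hat H_+)}>\Big(\nu-\frac{2s}{t}\Big)\max\Big[(\psi(h')/L(h'))^{1/2},\ \tfrac{t}{s}\bar W(H^*)\Big].
\]
Either one of the two noise ratios (with $h'<H^*$ or $\hat H_+<H^*$) is large in the self-normalized sense and falls under Lemma~\ref{majsup} with $\eta=\eta_{s,t}$, giving $\pi(i_0(\eta_{s,t}),\tfrac14(\nu-2s/t))$; or $\hat H_+=H^*$ and $|M(H^*)|/L(H^*)>(\tfrac{\nu t}{2s}-1)\bar W(H^*)$, which is bounded by~\eqref{Mh} and yields $\pi(0,\tfrac12(\tfrac{\nu t}{2s}-1))$. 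There is no telescoping sum and no separate regime ``$\hat H\ge\eta_{s,t}$'' in the argument.
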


\begin{proof}
  Let $0<s<t$. One has
  \begin{align*}
    \P' \big[ H^* > \hat H&, \bar W(H^*)^{-1} \bigabs{\hat f(\hat
      H) - f(x)} > t \big] \\
    &\leq \P' \big[ H^* > \hat H, (L(\hat H) / \psi(\hat
    H))^{1/2} \bigabs{\hat f(\hat H)-f(x)}>s \Big] \\
    &+\P'\Big[ H^*>\hat H, (\psi(\hat H) / L(\hat H))^{1/2} > (t/s)
    \bar W(H^*) \Big].
  \end{align*}
  The first term is less than $\pi(0,(s-1)/2)$, indeed, on $\{ W(H^*)\le
  u_0, H^*>\hat H \}$ one has
  \begin{align*}
    ( L(\hat H) / \psi(\hat H))^{1/2} \bigabs{\hat f(\hat H)-f(x)}
    &\le (L(\hat H) / \psi(\hat H))^{1/2} \bigabs{\tilde f(\hat
      H)-f(x)} \\
    & + (L(\hat H) \psi(\hat H))^{-1/2} \abs{ M(\hat H)} \\
    &\le (L(\hat H) / \psi(\hat H))^{1/2} W(\hat H)
    + (L(\hat H) \psi(\hat H))^{-1/2} \abs{ M(\hat H)} \\
    &\le 1 + (L(\hat H) \psi(\hat H))^{-1/2} \abs{ M(\hat H)},
  \end{align*}
  and the desired upper-bound follows from Lemma \ref{majsup}. Let us
  bound the second term. Consider
\begin{equation*}
  \om \in \big\{ W(H^*) \le u_0, H^* > \hat H, (\psi(\hat H) / L(\hat
  H))^{1/2} > (t/s) \bar W(H^*) \big\}.
\end{equation*}
Due to the definition of $\hat H$, see~\eqref{eq:hatH}, there exits
$h' = h'_\om \in [\Hb_{u_0}, \hat H]$ such that
\begin{equation*}
  \bigabs{\hat f(h') - \hat f(\hat H_+)} > \nu (\psi(h') /
  L(h'))^{1/2}.
\end{equation*}
But since $h' \leq \hat H < H^*$, one has
\begin{align*}
  \nu \Big( \frac{\psi(h')}{L(h')} \Big)^{1/2} < \bigabs{\hat f(h') -
    \hat f(\hat H_+)}& \le \bigabs{\tilde f(h')-\tilde f(\hat H_+)}+
  \frac{\abs{M(h')}}{L(h')} +
  \frac{\abs{M(\hat H_+)}}{L(\hat H_+)} \\
  &\le 2 \bar W(H^*) + \frac{\abs{M(h')}}{L(h')} + \frac{\abs{M(\hat
      H_+)}}{L(\hat H_+)} \\
  &\le \frac{2 s}{t} \Big( \frac{\psi(\hat H)}{L(\hat H)} \Big)^{1/2}
  + \frac{\abs{M(h')}}{L(h')} + \frac{\abs{ M(\hat H_+)} }{L(\hat H_+)} \\
  &\le \frac{2 s}{t} \Big( \frac{\psi(h')}{L(h')} \Big)^{1/2} +
  \frac{\abs{M(h')}}{L(h')} + \frac{\abs{M(\hat H_+)}}{L(\hat H_+)}.
\end{align*}
So, since $h' \leq \hat H$ entails (for such an $\omega$) that
$(\psi(h') / L(h'))^{1/2} \geq (\psi(\hat H) / L(\hat H))^{1/2} > (t /
s) \bar W(H^*)$, we obtain
\begin{align*}
  \frac{\abs{M(h')}}{L(h')} + \frac{\abs{M(\hat H_+)}}{L(\hat H_+)} &>
  \Big(\nu - \frac{2s}{t} \Big) \Big(\frac{\psi(h')}{L(h')}
  \Big)^{1/2} \\
  &\ge \Big(\nu - \frac{2s}{t} \Big) \max \Big[
  \Big(\frac{\psi(h')}{L(h')} \Big)^{1/2}, \frac ts \bar W(H^*) \Big],
\end{align*}
and therefore
\begin{align*}
  \om \in &\Big\{ \sup_{ \Hb_{u_0} \le h < H^*, h \in \HH} \frac{
    \abs{M(h)} }{(L(h) \psi(h))^{1/2}} > \frac 12 \Big(\nu -
  \frac{2s}{t} \Big) \Big\} \\
  &\cup \Big\{ \frac{\abs{M(H^*)}}{L(H^*)} \ge \frac{t}{2s} \Big(\nu -
  \frac{2s}{t} \Big) \bar W(H^*) \Big\}.
\end{align*}
In addition, because of $\hat H \ge \Hb_{u_0}$ one has
\begin{equation*}
  \delta_0(H^* / h_0)^{\al_0} \le \bar W(H^*) < (s / t) (\psi(\hat H) /
  L(\hat H)) ^{1/2} \le (s/t) u_0,
\end{equation*}
so $H^* \le \eta_{s, t}$, where $\eta_{s, t}$ is given
by~\eqref{eq:etast}. We have shown that
\begin{align*}
  &\Big\{ W(H^*) \le u_0, H^* > \hat H, \Big( \frac{\psi(\hat
    H)}{L(\hat H)} \Big)^{1/2}
  > \frac ts \bar W(H^*) \Big\}  \\
  &\subset \Big\{ H^* \le \eta_{s, t}, \sup_{ \Hb_{u_0} \le h < H^*,
    h\in\HH} \frac{ \abs{M(h)} }{(L(h) \psi(h))^{1/2}} > \frac 12
  \Big(\nu -
  \frac{2s}{t} \Big) \Big\} \\
  &\cup \Big\{ \frac{\abs{M( H^*)}}{L( H^*)} \ge \Big( \frac{\nu
    t}{2s} - 1 \Big) \bar W(H^*) \Big\},
\end{align*}
and we conclude using Lemma~\ref{majsup} and \eqref{Mh}.
\end{proof}

\subsubsection{Finalization of the proof}
\label{sec:finalization_proof_thm1}

In order to conclude the proof of Theorem~\ref{thm:upper_bound}, we
need the following uniform version of Theorem~\ref{thm:key}: under the
same assumptions as in Theorem~\ref{thm:key}, we have for any $0 < a_0
< a_1$:
\begin{equation}
  \label{eq:psi2uniform}
  \E \Bigcro{\sup_{a \in [a_0, a_1]} \exp\Bigpar{\frac{\la}{2} \frac{a
        M_N^2}{(a + V_N)^2} }} \le (1 + c_{\lambda})(1 + \log(a_1 /
  a_0)).
\end{equation}
Indeed, since
\begin{equation*}
  \Bigabs{ \frac{\partial}{\partial a} \frac{a M_N^2}{(a + V_N)^2} } =
  \Bigabs{ \frac{M_N^2}{(a + V_N)^3} (V_N - a)}
  \le a^{-1} \frac{a M_N^2}{(a + V_N)^2} = Y^a / a,
\end{equation*}
we have
\begin{align*} 
  \sup_{a\in[a_0,a_1]} \exp( \la Y^{a}/2) &\le \exp( \la Y^{a_0}/2) +
  \int_{a_0}^{a_1} a^{-1} \exp( \la Y^a/2) \la Y^a / 2  \,da\\
  &\le \exp( \la Y^{a_0}) + \int_{a_0}^{a_1} a^{-1} \exp( \la Y^a)\,
  da,
\end{align*}
so~\eqref{eq:psi2uniform} follows taking the expectation and using
Theorem~\ref{thm:key}. Now, using~\eqref{eq:psi2uniform} with
\begin{equation*}
  s_k = \frac{1}{\si_{k-1}} \ind{\abs{X_{k-1}-x}\le h}, \quad
  \ze_k = \ep_k/{\si_{k-1}}
\end{equation*}
we obtain
\begin{equation*}
  \E \Big[ \exp\bigpar{(\la/2)  \sup_{a \in [a_0, a_1]} Z(h,a)^2}
  \Big] \le C(1+\log(a_1/a_0)),
\end{equation*}
where we recall that $Z(h, a)$ is given by~\eqref{eq:MZdef}. So, using
Markov's inequality, we arrive, for all $h>0$, $a_1 > a_0 > 0$ and
$t\ge 0$, at:
\begin{equation}
  \label{eq:uniform_markov}
  \P \Big[ \sup_{a \in [a_0, a_1]} Z(h,a) \ge t \Big] \le C
  (1+\log(a_1/a_0)) e^{-\la t^2/2}.
\end{equation}
A consequence of \eqref{eq:uniform_markov}, together with an union
bound, is that for all $i_0 \in \N$ and $t>0$:
\begin{equation}
  \label{eq:majZ}
  \pi(i_0,t) \le C e^{-\la t^2/2} \sum_{i\ge i_0} (h_i/h_0)^{b\la t^2/2}
  \bigpar{1 + 2\log(u_0/\delta_0) + 2\al_0 \log(h_0/h_i)},
\end{equation}
where we recall that $\pi(i_0, t)$ is given by~\eqref{eq:pidef}.

Now, it remains to use what the grid $\mathcal H$ is. Recall that for
some $q \in (0,1)$, we have $h_i=h_0 q^i$ and we denote by $C$ any
positive number which depends only on $\la, \mu,\ga, q, b, u_0,
\delta_0, \al_0, \nu$. Using together~\eqref{eq:HstarhatH} and
Lemma~\ref{lem:HstarhatH}, one gets for $0 < s < t$:
\begin{align*}
  \P' \big[ \bar W(H^*)^{-1} \abs{ \hat f (\hat H)- f(x) } >t \big]
  &\leq \pi \Big( 0, \frac{t - \nu - 1}{2} \Big) + \pi\Big(0,\frac{s -
    1}{2} \Big) \\
  &+ \pi \Big( i_0(\eta_{s, t}), \frac 14 \Big( \nu - \frac{2s}{t}
  \Big) \Big) + \pi\Big(0, \frac 12 \Big( \frac{\nu t}{2s} - 1 \Big)
  \Big),
\end{align*}
and using~\eqref{eq:majZ}, we have for any $u>0$, $i_0 \in \mathbb N$:
\begin{equation*}
  \pi(i_0, u) \le C e^{-\la u^2/2} (i_0 + 1) q^{i_0 b \la u^2/2}.
\end{equation*}
Recalling that $\eta_{s, t}$ is given by~\eqref{eq:etast} and that
$i_0(\eta)=\min\set{i\in\N : h_i < \eta}$, we have
\begin{equation}
  \label{eq:i0upperlower}
  \frac{\log(\delta_0 / u_0) + \log(t / s)}{\alpha_0 \log(1 / q)} <
  i_0(\eta_{s, t}) \leq   \frac{\log(\delta_0 / u_0) + \log(t /
    s)}{\alpha_0 \log(1 / q)} + 1.
\end{equation}
Now, recall that $0 < \rho < \frac{b \mu \nu^2}{64 \alpha_0 (1 +
  \gamma)}$ and consider $s = \sqrt{(8 \rho \log t) / \lambda} +1$.
When $t$ is large enough, we have $s < t$ and:
\begin{align*}
  &\pi\Big(0, \frac{s - 1}{2} \Big) \leq C_1 t^{-\rho}, \quad \pi
  \Big( 0, \frac{t - \nu - 1}{2} \Big) \leq C_2 \exp( - C_2' t^2), \\
  &\pi\Big(0, \frac 12 \Big( \frac{\nu t}{2s} - 1 \Big) \Big) \leq C_3
  \exp\bigpar{ -C_3' (t / \log t)^2},
\end{align*}
for constants $C_i, C_i'$ that depends on $\lambda, b, \nu, \delta_0,
u_0, \alpha_0, q$. For the last probability, we have:
\begin{align*}
  \pi \Big( i_0(\eta_{s, t}), \frac 14 \Big( \nu - \frac{2s}{t} \Big)
  \Big) &\leq C \exp\Big( - \frac{\lambda (\nu - 2s / t)^2}{32} \Big)
  (i_0(\eta_{s, t}) + 1) \\
  & \quad \quad \times \exp\Big( -\frac{i_0(\eta_{s, t}) b \lambda
    (\nu - 2s / t)^2 \log(1/q)}{32} \Big),
\end{align*}
and by taking $\lambda \in (0, \frac{\mu}{2(1 + \gamma)})$ and $t$
large enough, one has
\begin{equation*}
  \frac{b \lambda (\nu - 2 s / t)^2}{32 \alpha_0} > \rho,
\end{equation*}
so we obtain together with~\eqref{eq:i0upperlower}:
\begin{align*}
  \pi \Big( i_0(\eta_{s, t}), \frac 14 \Big( \nu - \frac{2s}{t} \Big)
  \Big) &\leq C \frac{(\log(t+1))^{1 + \rho/2}}{t^{\rho}},
\end{align*}
when $t$ is large enough. This concludes the proof of
Theorem~\ref{thm:upper_bound}. $\hfill \square$

\subsection{Proof of Proposition~\ref{prop:det-rate}}

Let us denote for short $I_h = [x - h, x + h]$. Recall that $h_w$ is
well-defined when $n \geq \sigma^2 / (P_X[I_{h_0}] w(h_0)^{2})$, and
that $H_w$ is well defined on the event
\begin{equation*}
  \Omega_0 = \{ L(h_0) \geq w(h_0)^{-2} \}.
\end{equation*}
So, from now on, we suppose that $n$ is large enough, and we work on
$\Omega_0$. We need the following Lemma, which says that, when
$L(h_w)$ and $\E L (h_w)$ are close, then $H_w$ and $h_w$ are close.
\begin{lemma}
  \label{lem:bandwdith-embedding}
  If Assumption~\ref{ass:fsmoothness} holds, we have for any $0 <
  \epsilon < 1$ that on $\Omega_0$:
  \begin{align*}
    &\Big\{ L(h_w) \geq \frac{\E L(h_w)}{(1 + \epsilon)^s} \Big\}
    \subset \big\{ H_w \leq (1 + \varepsilon) h_w \big\} \quad
    \text{ and } \\
    & \Big\{ L(h_w) \leq \frac{\E L(h_w)}{(1 - \epsilon)^s} \Big\}
    \subset \big\{ H_w > (1 - \varepsilon) h_w \big\},
  \end{align*}
  when $n$ is large enough.
\end{lemma}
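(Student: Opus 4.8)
The plan is to reduce both inclusions to the monotonicity of two ratio functions. Throughout one works on $\Omega_0$ (so that $H_w$ is well defined), and for $h\in\,]0,h_0]$ with $L(h)>0$ one sets $g(h):=(\psi(h)/L(h))^{1/2}/w(h)$, and for all $h\in\,]0,h_0]$ one sets $\bar g(h):=(\psi(h)/\E L(h))^{1/2}/w(h)$, where $\E L(h)=\sigma^{-2}n\,P_X(I_h)$. By Assumption~\ref{ass:fsmoothness} the modulus $w$ is increasing on $[0,h_0]$, while $\psi$ is non-increasing and $L,\E L$ are non-decreasing; hence $g$ and $\bar g$ are non-increasing, and $g$ is right-continuous in $h$ (so the minimum in \eqref{eq:Hw} is attained). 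The decisive consequence is the equivalence
\[
H_w\le h'\quad\Longleftrightarrow\quad g(h')\le 1 ,
\]
valid for every $h'\in\,]0,h_0]$ with the convention $g(h')=+\infty$ when $L(h')=0$. On the deterministic side, $h\mapsto \E L(h)$ is continuous, so $\bar g$ is continuous and the minimum $h_w$ of \eqref{eq:hw} satisfies the exact identity $\psi(h_w)=\E L(h_w)\,w(h_w)^2$. Finally $\E L(h)\to\infty$ for each fixed $h>0$, so $h_w\to 0$ as $n\to\infty$; hence for $n$ large $(1\pm\epsilon)h_w\in\,]0,h_0]$, and since $\ell_w$ is slowly varying, $\ell_w((1\pm\epsilon)h_w)/\ell_w(h_w)\to 1$.

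For the first inclusion one assumes $L(h_w)\ge \E L(h_w)/(1+\epsilon)^s$ (in particular $L(h_w)>0$). Using that $\psi$ is non-increasing, $L$ non-decreasing, and the hypothesis,
\[
\frac{\psi((1+\epsilon)h_w)}{L((1+\epsilon)h_w)}\;\le\;\frac{(1+\epsilon)^s\,\psi(h_w)}{\E L(h_w)}\;=\;(1+\epsilon)^s\,w(h_w)^2 ,
\]
whereas $w((1+\epsilon)h_w)^2=(1+\epsilon)^{2s}\bigpar{\ell_w((1+\epsilon)h_w)/\ell_w(h_w)}^2\,w(h_w)^2$, and the prefactor tends to $(1+\epsilon)^{2s}>(1+\epsilon)^s$, so $w((1+\epsilon)h_w)^2\ge(1+\epsilon)^s\,w(h_w)^2$ once $n$ is large. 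Combining, $g((1+\epsilon)h_w)\le 1$, i.e.\ $H_w\le(1+\epsilon)h_w$.

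For the second inclusion one assumes $L(h_w)\le \E L(h_w)/(1-\epsilon)^s$. If $L((1-\epsilon)h_w)=0$ then $H_w>(1-\epsilon)h_w$ trivially; otherwise the same monotonicities together with the identity $\psi(h_w)=\E L(h_w)\,w(h_w)^2$ give
\[
\frac{\psi((1-\epsilon)h_w)}{L((1-\epsilon)h_w)}\;\ge\;\frac{(1-\epsilon)^s\,\psi(h_w)}{\E L(h_w)}\;=\;(1-\epsilon)^s\,w(h_w)^2 ,
\]
while $w((1-\epsilon)h_w)^2=(1-\epsilon)^{2s}\bigpar{\ell_w((1-\epsilon)h_w)/\ell_w(h_w)}^2\,w(h_w)^2<(1-\epsilon)^s\,w(h_w)^2$ for $n$ large (prefactor $\to(1-\epsilon)^{2s}<(1-\epsilon)^s$). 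Hence $g((1-\epsilon)h_w)>1$, and by the equivalence above $H_w>(1-\epsilon)h_w$.

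The crux of the argument is the elementary but essential observation that $g$ and $\bar g$ are monotone, which converts the definitions of $H_w$ and $h_w$ into the usable equivalence and identity above; once these are in hand everything else is bookkeeping, with a single appeal to slow variation of $\ell_w$ to absorb the gap between $(1\pm\epsilon)^s$ and $(1\pm\epsilon)^{2s}$. The one point genuinely requiring care is the identity $\psi(h_w)=\E L(h_w)\,w(h_w)^2$, since the definition of $h_w$ as a minimum only yields ``$\le$'' directly; the reverse inequality needs the continuity of $h\mapsto\E L(h)$ near $h_w$, that is, the absence of an atom of $P_X$ at distance $h_w$ from $x$ — automatic under Assumption~\ref{ass:behavior-PX}, which is in force whenever this lemma is invoked. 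The ``for $n$ large enough'' clause is likewise harmless, $\epsilon$ being fixed and $(h_w)_n$ a deterministic null sequence.
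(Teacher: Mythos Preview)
Your proof is correct and follows essentially the same route as the paper's: both use the equivalence $\{H_w\le h'\}=\{L(h')\ge\psi(h')/w(h')^2\}$, the identity $\psi(h_w)=\E L(h_w)\,w(h_w)^2$, the monotonicity of $L$, and slow variation to compare the defining inequalities at $(1\pm\epsilon)h_w$. Your packaging differs only cosmetically---you bound $\psi((1+\epsilon)h_w)\le\psi(h_w)$ directly and then invoke slow variation of $\ell_w$ alone, whereas the paper bundles both into the slow variation of $\psi/\ell_w^2$---and you are in fact more careful than the paper in justifying the equality $\psi(h_w)=\E L(h_w)\,w(h_w)^2$, which the paper uses without comment.
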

The proof of Lemma~\ref{lem:bandwdith-embedding} is given in
Section~\ref{sec:lemmas} below.  We use also the next Lemma from
\cite{MR1865343} (see Claim~2, p.~858). It is a corollary of Berbee's
coupling lemma \cite{MR547109}, that uses a construction from the
proof of Proposition~5.1 in \cite{MR1440142}, see p.~484.
  \begin{lemma}
    \label{lem:coupling}
    Grant Assumption~\ref{ass:Xbeta-mixing}. Let $q, q_1$ be integers
    such that $0 \leq q_1 \leq q / 2$, $q_1 \geq 1$. Then, there exist
    random variables $(X^*_i)_{i=1}^n$ satisfying the following:
    \begin{itemize}
    \item For $j = 1, \ldots, J := [n / q],$ the random vectors
      \begin{equation*}
        U_{j, 1} := (X_{(j-1) q + 1}, \ldots, X_{(j-1)q + q_1}) \; \text{
          and } \; U_{j, 1}^* := (X_{(j-1)q + 1}^*, \ldots, X_{(j-1)q +
          q_1}^*)
      \end{equation*}
      have the same distribution, and so have the random vectors
      \begin{equation*}
        U_{j, 2} := (X_{(j-1) q + q_1 + 1}, \ldots, X_{j q}) \; \text{
          and } \; U_{j, 2}^* := (X_{(j-1)q + q_1 + 1}^*, \ldots, X_{j q}^*).
      \end{equation*}
    \item For $j = 1, \ldots, J$,
      \begin{equation*}
        \P[ U_{j, 1} \neq U_{j, 1}^*] \leq \beta_{q - q_1} \; \text{
          and } \; \P[ U_{j, 2} \neq U_{j, 2}^*] \leq \beta_{q_1}.
      \end{equation*}
    \item For each $k = 1, 2$, the random vectors $ U_{1, k}^*,
      \ldots, U_{J, k}^*$ are independent.
    \end{itemize}
  \end{lemma}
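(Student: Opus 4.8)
The plan is to deduce this from Berbee's coupling lemma \cite{MR547109}, applied iteratively to each of the two families of sub-blocks, the gaps needed for the coupling being furnished by the blocking structure itself. Recall Berbee's lemma: if $X$ is a random variable valued in a Polish space, $\mathcal M$ is a sub-$\sigma$-field, and $U$ is uniform on $[0,1]$ and independent of $\mathcal M \vee \sigma(X)$, then there is a random variable $X^*$, measurable with respect to $\mathcal M \vee \sigma(X) \vee \sigma(U)$, with the same law as $X$, independent of $\mathcal M$, and such that $\P[X \neq X^*] = \beta(\mathcal M, \sigma(X))$. First I would enlarge the probability space so that it carries i.i.d. variables $(U_j)_{1 \le j \le J}$ and $(U_j')_{1 \le j \le J}$, uniform on $[0,1]$, independent of $(X_k)_k$; such an enlargement leaves unchanged the law of $(X_k)_k$ and all its $\beta$-mixing coefficients.

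Then I would carry out two parallel inductions. For the heads, put $U_{1,1}^* := U_{1,1}$ and, given $U_{1,1}^*, \dots, U_{j-1,1}^*$, apply Berbee's lemma with $X = U_{j,1}$, $\mathcal M := \sigma(U_{1,1}^*, \dots, U_{j-1,1}^*)$ and $U := U_j$ (which is indeed independent of $\mathcal M \vee \sigma(U_{j,1})$, since it is independent of $(X_k)_k$ and of $U_1, \dots, U_{j-1}$): this produces $U_{j,1}^* \overset{d}{=} U_{j,1}$, makes $U_{1,1}^*, \dots, U_{J,1}^*$ mutually independent (each being independent of the $\sigma$-field generated by its predecessors), and yields $\P[U_{j,1} \neq U_{j,1}^*] = \beta\bigl(\sigma(U_{1,1}^*,\dots,U_{j-1,1}^*),\sigma(U_{j,1})\bigr)$. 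For the tails I would run the identical construction on $U_{1,2}^*, \dots, U_{J,2}^*$ using the fresh uniforms $U_1', \dots, U_J'$. Finally I would set $(X_i^*)_{i=1}^n$ block by block: if $i$ lies in the head part of block $j$ (resp.\ in the tail part), let $X_i^*$ be the matching coordinate of $U_{j,1}^*$ (resp.\ of $U_{j,2}^*$), and $X_i^* := X_i$ for the at most $q-1$ indices beyond $Jq$. By construction $U_{j,1}^* = (X_{(j-1)q+1}^*,\dots,X_{(j-1)q+q_1}^*)$ and $U_{j,2}^* = (X_{(j-1)q+q_1+1}^*,\dots,X_{jq}^*)$, which gives the first and third bullet points.

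The step I expect to require the most care is turning the ``internal'' mixing coefficients produced by the induction into the stated bounds $\beta_{q-q_1}$ and $\beta_{q_1}$. The mechanism is: by induction $\sigma(U_{1,1}^*,\dots,U_{j-1,1}^*) \subseteq \sigma(U_{1,1},\dots,U_{j-1,1}) \vee \sigma(U_1,\dots,U_{j-1})$, and the second factor is independent of $\sigma(X_k : k \in \Z)$; since the $\beta$-coefficient is monotone in each argument and is unaffected when one of its arguments is enlarged by an independent $\sigma$-field, one obtains $\P[U_{j,1} \neq U_{j,1}^*] \le \beta\bigl(\mathscr X_1^{(j-2)q+q_1}, \mathscr X_{(j-1)q+1}^{+\infty}\bigr) \le \beta_{q-q_1}$ by stationarity, the two index ranges being separated by $q-q_1$ steps. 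Symmetrically, for the tails one has $\sigma(U_{1,2}^*,\dots,U_{j-1,2}^*) \subseteq \mathscr X_1^{(j-1)q} \vee \sigma(U_1',\dots,U_{j-1}')$ while $\sigma(U_{j,2}) = \mathscr X_{(j-1)q+q_1+1}^{jq}$, a separation of $q_1$ steps, whence $\P[U_{j,2} \neq U_{j,2}^*] \le \beta_{q_1}$; this is the second bullet point. The hypothesis $q_1 \le q/2$ is not used in the coupling itself, only afterwards, where it ensures $\beta_{q-q_1} \le \beta_{q_1}$.
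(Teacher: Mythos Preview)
Your argument is correct and is precisely the standard iterated application of Berbee's lemma that underlies the cited construction. Note, however, that the paper does not actually prove this statement: it quotes it verbatim from \cite{MR1865343} (Claim~2, p.~858), pointing to Berbee \cite{MR547109} and to the construction in the proof of Proposition~5.1 of \cite{MR1440142}. What you have written is essentially a self-contained reconstruction of that argument, and the details you supply --- the inductive measurability $\sigma(U_{1,k}^*,\dots,U_{j-1,k}^*)\subseteq\sigma(U_{1,k},\dots,U_{j-1,k})\vee\sigma(\text{auxiliary uniforms})$, the invariance of the $\beta$-coefficient under enlargement by an independent $\sigma$-field, and the gap count yielding $\beta_{q-q_1}$ and $\beta_{q_1}$ --- are exactly the points one needs. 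Your remark that $q_1\le q/2$ is used only downstream (to control both coupling probabilities by $\beta_{[q/2]}$ in \eqref{eq:controlPstar}) is also accurate.
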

  In what follows, we take simply $q_1 = [q / 2] + 1$, where $[x]$
  stands for the integral part of $x$, and introduce the event
  $\Omega^* = \{ X_i = X_i^*, \forall i = 1, \ldots, n \}$. Assume to
  simplify that $n = J q$. Lemma~\ref{lem:coupling} gives
  \begin{equation}
    \label{eq:controlPstar}
    \P [ (\Omega^*)^\complement] \leq J ( \beta_{q - q_1} + \beta_{q -
      q_1}) \leq 2 J \beta_{[q / 2]} \leq \frac{2 n \beta_{[q /
        2]}}{q}.
  \end{equation}
  Then, denote for short $L^*(h) = \sum_{i=1}^n \ind{|X_{i-1}^* - x|
    \leq h}$, and note that, using
  Lemma~\ref{lem:bandwdith-embedding}, we have, for $z := 1 - 1 / (1 +
  \epsilon)^s$:
  \begin{align*}
    \big\{ H_w > (1 + \epsilon)^s h_w \big\} \cap \Omega^* \cap
    \Omega_0 &\subset
    \big\{ L^*(h_w) - \E L(h_w) \geq z \E L(h_w) \big\} \\
    &= \Big\{ \frac{1}{n} \sum_{i=1}^n ( \ind{|X_{i-1}^* - x| \leq
      h_w} - P_X[I_{h_w}] ) \geq z P_X[I_{h_w}] \Big\}.
  \end{align*}
  Use the following decomposition of the sum:
  \begin{align*}
    \frac{1}{n} \sum_{i=1}^n ( \ind{|X_{i-1}^* - x| \leq h_w} -
    P_X[I_{h_w}] ) \leq \frac{1}{J} \sum_{j=1}^J ( Z_{j, 1} + Z_{j,2}
    ),
  \end{align*}
  where for $k \in \{ 1, 2 \}$, we put
  \begin{align*}
    Z_{j, k} := \frac 1q \sum_{i \in I_{j, k}} ( \ind{|X_{i-1}^* - x|
      \leq h_w} - P_X[I_{h_w}] ),
  \end{align*}
  where $I_{j, 1} := \{ (j-1) q + 1, \ldots, (j-1) q + q_1 \}$ and $
  I_{j, 2} := \{ (j-1) q + q_1 + 1, \ldots, j q \}.$ For $k \in \{ 1,
  2 \}$, we have using Lemma~\ref{lem:coupling} that the variables
  $(Z_{j, k})_{j=1}^J$ are independent, centered, such that
  $\norm{Z_{j, k}}_\infty \leq 1/2$ and $\E[Z_{j, k}^2] \leq
  P_X[I_{h_w}] / 4$. So, Bernstein's inequality gives
  \begin{equation*}
    \P \Big[ \Big\{ H_w > \frac{1}{(1 - z)^{1 / s}} h_w \Big\} \cap
    \Omega^* \cap \Omega_0 \Big] \leq 2 \exp \Big( - \frac{z^2}{2(1 +
      z / 3)} \frac{n P_X[I_{h_w}]}{q} \Big),
  \end{equation*}
  and doing the same on the other side gives for any $z \in (0, 1)$:
  \begin{equation*}
    \P \Big[ \Big\{ \frac{1}{(1 + z)^{1 / s}} h_w \leq H_w \leq
    \frac{1}{(1 - z)^{1 / s}} h_w \Big\}^\complement \cap \Omega^*
    \big] \leq 4 \exp \Big( - \frac{z^2}{2(1 + z / 3)} \frac{n
      P_X[I_{h_w}]}{q} \Big).
  \end{equation*}
  So, when $n$ is large enough, we have
  \begin{equation}
    \label{eq:Hwhw}
    \P [ h_w / 2 \leq H_w \leq 2 h_w ] \geq 1 - 4 \exp \Big(- \frac{C n
      P_X[I_{h_w}]}{q} \Big) - \frac{2 n \beta_{[q / 2]}}{q}.
  \end{equation}
  But, since on $[0, h_0]$ $w$ is increasing and $w(h) = h^s
  \ell_w(h)$ where $\ell_w$ is slowly varying, we have $\{ h_w / 2
  \leq H_w \leq 2 h_w \} \subset \{ w(h_w) / 4 \leq w(H_w) \leq 4
  w(h_w) \}$ when $n$ is large enough. Now, Lemma~\ref{lem:hw} and
  Assumption~\ref{ass:behavior-PX} gives that
  \begin{equation*}
    n P_X[I_{h_w}] =  n^{2s / (2s + \tau + 1)} \ell(1 / n),
  \end{equation*}
  where $\ell$ is a slowly varying function that depends on $\ell_X$,
  $\ell_w$, $s$, $\tau$ and $\sigma$. When the $\beta$-mixing is
  geometric, we have $\psi^{-1}(p) = \exp((p / \eta)^{1/\kappa})$, so
  the choice $q = n^{2 s \kappa / ((2s + \tau + 1)(\kappa+1))}$
  implies
  \begin{equation*}
    \P \Big[ \{ \frac{w(h_w)}{4} \leq w(H_w) \leq 4 w(h_w) \} \cap
    \Omega_0 \Big] \geq 1 - \exp( -C_1 n^{\delta_1} \ell_1(1 / n)).
  \end{equation*}
  When the mixing is arithmetic, we have $\psi^{-1}(p) = (p /
  \eta)^{1/\kappa}$, so the choice $q = n^{2s / (2s + \tau + 1)}
  \ell(1 / n) / (\log n)^2$ implies
  \begin{equation*}
    \P \Big[ \{ \frac{w(h_w)}{4} \leq w(H_w) \leq 4 w(h_w) \} \cap
    \Omega_0 \Big] \geq 1 -C_2 n^{-\delta_2} \ell_2(1/n).
  \end{equation*}
  So, it only remains to control the probability of $\Omega_0$. Using
  the same coupling argument as before together with Bernstein's
  inequality, we have when $n$ is large enough:
  \begin{align*}
    \P[ L(h_0) < w(h_0)^{-2} ] &= \P[ L(h_0) - \E L(h_0) < w(h_0)^{-2}
    - \E L(h_0)] \\
    &\leq \P\Big( L(h_0) - \E L(h_0) < -\frac{n P_X[I_{h_0}]}{2} \Big)
    \\
    &\leq \exp\Big( - C_2 \frac{n P_X[I_{h_0}]}{q}\Big) + \frac{2 n
      \beta_{[q/2]}}{q}.
  \end{align*}
  So, when the $\beta$-mixing is geometric, the choice $q = n^{\kappa
    / (\kappa+1)}$ implies that $\P[ \Omega_0^\complement ] \leq \exp(
  - C_1 n^{1 / (\kappa+1)}) = o(\phi_n)$. When the mixing is
  arithmetic, we have $\psi^{-1}(p) = (p / \eta)^{1/\kappa}$, so the
  choice $q = n / (\log n)^2$ gives $\P[ \Omega_0^\complement ] \leq
  C_2 (\log n)^2 n^{-1 / \kappa} = o(\phi_n)$. This concludes the
  proof of Proposition~\ref{prop:det-rate}. $\hfill \square$

\subsection{Proof of Corollary~\ref{cor:det-upper-bound}}

Let us fix $\rho \in (p, \frac{b \mu \nu^2}{128 (1 + \gamma)})$ (note
that $\alpha_0 = 2$ under Assumption~\ref{ass:fsmoothness}). Using
Assumption~\ref{ass:fsmoothness}, one can replace $\bar W$ by $w$ in
the statement of Theorem~\ref{thm:upper_bound}. This gives
\begin{equation*}
  \P \Big[ \Big\{ \abs{ \hat f (\hat H) - f(x) } \geq t w(H^*)
  \Big\} \cap \Omega_0 \Big] \leq C_0 \frac{(\log(t+1))^{\rho/2 +
      1}}{t^\rho}
\end{equation*}
for any $t \geq t_0$, where we recall that $\Omega_0 = \{
L(h_0)^{-1/2} \leq w(h_0) \}$, and where
\begin{equation*}
  H^* := \min \Big\{ h \in \HH : \Big( \frac{\psi(h)}{L(h)} \Big)^{1/2}
    \le w(h) \Big\}.
\end{equation*}
Recall the definition \eqref{eq:Hw} of $H_w$, and note that by
construction of $\mathcal H$, one has that $H_w \leq H^* \leq q^{-1}
H_w$. So, on the event $\{ H_w \leq 2 h_w \}$, one has, using the fact
that $w$ is $s$-regularly varying, that $w(H^*) \leq w(2 q^{-1} h_w)
\leq 2 (2 / q)^s w(h_w)$ for $n$ large enough. So, putting for short
$A := \{ H_w \leq 2 h_w \} \cap \Omega_0$, we have
\begin{equation*}
  \P \Big[ \Big\{ \abs{ \hat f (\hat H) - f(x) } \geq c_1 t w(h_w)
  \Big\} \cap A \Big] \leq C_0 \frac{(\log(t+1))^{\rho/2 +
      1}}{t^\rho}
\end{equation*}
for any $t \geq t_0$, where $c_1 = 2 (2 / q)^s$. Since $\rho > p$, we
obtain, by integrating with respect to $t$, that
\begin{equation*}
  \E \big[ | w(h_w)^{-1} (\hat f (\hat H) - f(x)) |^p \ind{A} \big]
  \leq C_1,
\end{equation*}
where $C_1$ is a constant depending on $C_0, t_0, q, \rho, s, p$. Now,
it only remains to observe that using Proposition~\ref{prop:det-rate},
$\P(A^\complement) \leq 2 \varphi_n$, and that $\varphi_n = o(w(h_w))$
in the geometrically $\beta$-mixing case, and in the arithmetically
$\beta$-mixing when $\kappa < 2s / (s + \tau + 1)$. $\hfill \square$

\section{Proof of the Lemmas}
\label{sec:lemmas}

\subsection{Proof of Lemma~\ref{lem:bandwdith-embedding}}

  For $n$ large enough, we have $\psi((1 + \epsilon) h_w) / \ell_w( (1
  + \epsilon) h_w)^2 \leq (1 + \epsilon)^s \psi(h_w) / \ell_w(h_w)^2$
  since $\psi / \ell_w^2$ is slowly varying. So,
\begin{equation*}
  \frac{\psi((1 + \epsilon) h_w)}{w( (1 + \epsilon) h_w)^2} \leq
  \frac{1}{(1 + \epsilon)^s} \frac{\psi(h_w)}{w(h_w)^2} =
  \frac{1}{(1 + \epsilon)^s} \E L(h_w).
\end{equation*}
On the other hand, by definition of $H_w$, we have
\begin{equation*}
  \{ H_w \leq (1 + \epsilon) h_w \} = \Big\{ L( (1 + \epsilon) h_w) \geq
  \frac{\psi( (1 + \epsilon) h_w ) }{w( (1 + \epsilon) h_w)^2} \Big\},
\end{equation*}
and $L( (1 + \epsilon) h_w) \geq L(h_w)$, so we proved that the
embedding
\begin{equation*}
  \Big \{\frac{L(h_w)}{\E L(h_w)} \geq \frac{1}{(1 + \epsilon)^s}
  \Big\} \subset \{ H_w \leq (1 + \epsilon) h_w \}
\end{equation*}
holds when $n$ is large enough. The same argument allows to prove that
\begin{equation*}
  \Big \{\frac{L(h_w)}{\E L(h_w)} \leq \frac{1}{(1 - \epsilon)^s}
  \Big\} \subset
  \{ H_w > (1 - \epsilon) h_w \},
\end{equation*}
which concludes the proof of the Lemma.  $\hfill \square$

\subsection{Proof of Lemma \ref{l1}}

  Take $m \in[0, \mu)$ and $\rho \in \R$. Note that $e^y \le 1 +
  ye^y \le 1 + y + y^2 e^y$ for any $y \geq 0$, so
  \begin{align*}
    e^{m \ze^2 + \rho\ze} &\leq e^{\rho \ze} + m \ze^2 e^{m \ze^2 +
      \rho \ze} \\
    &\le 1 + \rho\ze + (\rho^2+m) \ze^2 e^{m \ze^2 + \rho\ze},
  \end{align*}
  and
  \begin{equation}
    \label{majbase}
    \E[ e^{m \ze^2 + \rho\ze}] \le 1+ (\rho^2+m) \E[\ze^2 e^{m \ze^2 +
      \rho\ze}],
  \end{equation}
  since $\E \ze = 0$. Take $m_1 \in (m, \mu)$. Since $\rho \zeta \leq
  \epsilon \rho^2 / 2 + \zeta^2 / (2 \epsilon)$ for any $\epsilon >
  0$, we obtain for $\epsilon = [2(m_1 - m)]^{-1}$:
  \begin{equation*}
    e^{m\ze^2+\rho\ze} \le \exp\bigpar{\frac{\rho^2}{4(m_1-m)}}
    e^{m_1\ze^2}.
  \end{equation*}
  Together with
  \begin{equation*}
    \ze^2 \le \frac{1}{\mu - m_1} e^{(\mu - m_1)\ze^2}
  \end{equation*}
  and the definition of $\mu$, this entails
  \begin{equation*}
    \E[ \ze^2 e^{m \ze^2 + \rho\ze}] \le \frac{\gamma}{\mu - m_1}
    \exp\bigpar{\frac{\rho^2}{4(m_1-m)}}.
  \end{equation*}
  Thus, 
  \begin{align*}
    \E[e^{m \ze^2 + \rho\ze}] & \le 1 + \frac{\gamma(\rho^2 + m)
    }{\mu-m_1} \exp\bigpar{\frac{\rho^2}{4(m_1-m)}} \\
    &\le 1 + \frac{\gamma(\rho^2+m) }{\mu-m_1}
    \exp\bigpar{\frac{\rho^2+m}{4(m_1-m)}}.
  \end{align*}
  For the choice $m_1 = \mu / (1 + 2 \gamma) + 2 \gamma m / (1 + 2
  \gamma)$ one has $\gamma / (\mu - m_1) = 1 / [2(m_1 - m)]$, so the
  Lemma follows using that $1 + ye^{y/2} \le e^{y}$ for all $y\ge 0$.
  This concludes the proof of the Lemma.  $\hfill \square$

\subsection{Proof of Lemma \ref{calcul}}

  Let $\eta\in[0,1]$ and $z\in\R_+$ be such that
  $e^{A\eta}\cosh((1-\eta)z)-\cosh(z) \ge 0$.  Let us show that one
  has
  \begin{equation}
    \label{majz}
    z \le 2 \log 2 + 2 A.
  \end{equation}
  Since $\cosh(z)/\cosh((1-\eta)z) \ge e^{\eta z}/2$ one has $z\le
  \eta^{-1} \log 2 + A$. Thus \eqref{majz} holds if $\eta\ge 1/2$. If
  $\eta<1/2$ and $z \geq \log(3)$, it is easy to check that the
  derivative of $x \mapsto \cosh((1 - x)z) e^{\eta x/2}$ is
  non-positive, hence $\cosh(z)\ge e^{\eta z/2} \cosh((1-\eta)z)$ in
  this case. Thus, we have either $z \le \log(3)$ or $z\le 2A$ which
  yields \eqref{majz} in every case. Finally, from \eqref{majz}, we
  easily derive
  \begin{align*}
    e^{A\eta} \cosh((1-\eta)z) - \cosh(z) &= \cosh ((1-\eta)z)
    \Big( e^{A\eta} - \frac{\cosh(z)}{\cosh((1-\eta)z)} \Big) \\
    &\le \cosh(z) (e^{A\eta} - 1) \\
    &\le \cosh\bigpar{2\log(2) + 2 A} A\eta e^{A\eta}. \qedhere
  \end{align*}
  This concludes the proof of the Lemma.  $\hfill \square$

\bibliography{biblio}

\end{document}